\newcounter{thm}
\numberwithin{thm}{section}						
\numberwithin{equation}{section}				
\newtheorem{lemma}[thm]{Lemma}
\newtheorem{prop}[thm]{Proposition}
\newtheorem{cor}[thm]{Corollary}
\theoremstyle{definition}
\newtheorem{dfn}[thm]{Definition}
\newtheorem{comp}{Comparison}
\newtheorem{ex}[thm]{Example}
\theoremstyle{remark}
\newtheorem{rem}[thm]{Remark}
\crefname{lemma}{Lemma}{Lemmas}
\Crefname{lemma}{Lemma}{Lemmas}
\crefname{prop}{Proposition}{Propositions}
\Crefname{prop}{Proposition}{Propositions}
\crefname{cor}{Corollary}{Corollaries}
\Crefname{cor}{Corollary}{Corollaries}
\crefname{dfn}{Definition}{Definitions}
\Crefname{dfn}{Definition}{Definitions}
\crefname{ex}{Example}{Examples}
\Crefname{ex}{Example}{Examples}
\crefname{rem}{Remark}{Remarks}
\Crefname{rem}{Remark}{Remarks}
\crefname{equation}{}{}
\Crefname{equation}{}{}
\crefname{enumi}{}{}
\Crefname{enumi}{}{}
\setlist[itemize]{itemsep=0mm}
\setlist[itemize,2]{label=$\circ$}
\setlist[enumerate]{itemsep=0mm}
\setlist[description]{itemsep=0mm}
\setlist[enumerate,1]{label=\arabic*)}	
\newlist{caselist}{enumerate}{1}
\setlist[caselist]{label=Case~\Roman*:,align=left,itemindent=0pt,leftmargin=1.5\parindent,labelwidth=*,labelindent=0.5\parindent}
\newcommand{\compl}{\mathbb{C}}
\newcommand{\ints}{\mathbb{Z}}
\newcommand{\nats}{\mathbb{N}}
\newcommand{\even}{{\bar{0}}}
\newcommand{\odd}{{\bar{1}}}
\newcommand{\eps}{\varepsilon}					
\newenvironment{smallpmatrix}{\left(\begin{smallmatrix}}{\end{smallmatrix}\right)}
\DeclareRobustCommand\svdots{%
  \mathpalette\@svdots{}%
}
\newcommand*{\@svdots}[2]{%
  \sbox0{$#1\cdotp\cdotp\cdotp\m@th$}%
  \sbox2{$#1.\m@th$}%
  \vbox{%
    \dimen@=\wd0 %
    \advance\dimen@ -3\ht2 %
    \kern.5\dimen@
    \dimen@=\wd2 %
    \advance\dimen@ -\ht2 %
    \dimen2=\wd0 %
    \advance\dimen2 -\dimen@
    \vbox to \dimen2{%
      \offinterlineskip
      \copy2 \vfill\copy2 \vfill\copy2 %
    }%
  }%
}
\DeclareRobustCommand\sddots{%
  \mathinner{%
    \mathpalette\@sddots{}%
    \mkern\thinmuskip
  }%
}
\newcommand*{\@sddots}[2]{%
  \sbox0{$#1\cdotp\cdotp\cdotp\m@th$}%
  \sbox2{$#1.\m@th$}%
  \vbox{%
    \dimen@=\wd0 %
    \advance\dimen@ -3\ht2 %
    \kern.5\dimen@
    \dimen@=\wd2 %
    \advance\dimen@ -\ht2 %
    \dimen2=\wd0 %
    \advance\dimen2 -\dimen@
    \vbox to \dimen2{%
      \offinterlineskip
      \hbox{$#1\mathpunct{.}\m@th$}%
      \vfill
      \hbox{$#1\mathpunct{\kern\wd2}\mathpunct{.}\m@th$}%
      \vfill
      \hbox{$#1\mathpunct{\kern\wd2}\mathpunct{\kern\wd2}\mathpunct{.}\m@th$}%
    }%
  }%
}
\newcommand{\lie}[1]{\mathfrak{#1}}				
\DeclareMathOperator{\rk}{rk}					
\DeclareMathOperator{\id}{id}					
\DeclareMathOperator{\ad}{ad}					
\newcommand{\GL}{\mathrm{GL}}					
\newcommand{\Aut}{\mathrm{Aut}}					
\newcommand{\cartangr}{\mathcal{G}}				
\newcommand{\weylgrpd}{\mathcal{W}}				
\newcommand{\cartanroots}{\Delta}				
\DeclareMathOperator{\tr}{tr}	                
\DeclarePairedDelimiter\abs{\lvert}{\rvert}     
\newcommand{\shuffles}{\mathrm{Shff}}			
\newcommand{\partitions}{\mathcal{P}}			
\newcommand{\GCM}{\mathrm{GCM}}					
\newcommand{\real}{\mathrm{real}}				
\title{The Weyl groupoids of $\lie{sl}(m|n)$ and $\lie{osp}(r|2n)$}
\author{Lukas Bonfert}
\address{L.B.: Max Planck Institute for Mathematics, Bonn, Germany}
\author{Jonas Nehme}
\address{J.N.: Max Planck Institute for Mathematics, Bonn, Germany}
\date{\today}
\keywords{Weyl groupoids, Lie superalgebras}
\begin{document}
	\begin{abstract}
		We provide a convenient formulation of the definition of Cartan graphs and Weyl groupoids introduced by Heckenberger and Schneider, and construct Cartan graphs for regular symmetrizable contragredient Lie superalgebras.
		For $\lie{sl}(m|n)$, $\lie{osp}(2m+1|2n)$ and $\lie{osp}(2m|2n)$ we explicitly describe the Cartan graph in terms of partitions and determine the relations between the generators of their Weyl groupoids.
	\end{abstract}
	\maketitle
	\section{Introduction}
		Let $\lie{g}$ be a basic classical simple Lie superalgebra and let $\lie{h}\subseteq\lie{g}$ be a Cartan subalgebra.
		It is well-known that, in contrast to the situation for semisimple Lie algebras, not all Borel subalgebras of $\lie{g}$ containing $\lie{h}$ are conjugate to each other.
		As a consequence there are several systems of simple roots that are not conjugate under the action of the Weyl group.
		The number of conjugacy classes is however finite (see e.g.~\cite[Thm.~3.1.2]{Musson}), which is equivalent to saying that there are only finitely many Borel subalgebras with fixed even part $\lie{b}_\even$.

		In \cite{PenkovSerganovaClassicalCohomology} Penkov and Serganova introduced \emph{odd reflections} to pass between Borel subalgebras with the same even part.
		Explicitly, they can be described as follows (see e.g.~\cite[\S 1.4]{ChengWang} or \cite[\S 3.5]{Musson}).
		Let $\{\alpha_1,\dots,\alpha_n\}$ be the simple roots corresponding to some Borel subalgebra $\lie{b}$ and suppose the simple root $\alpha_i$ is odd isotropic.
		Then the simple roots $\alpha'_j=r_i(\alpha_j)$ for the Borel subalgebra $\lie{b}'$ obtained from $\lie{b}$ by odd reflection at $\alpha_i$ are
		\begin{equation}
			\label{oddreflectionformula}
			\alpha'_j=r_i(\alpha_j)=\begin{cases}
				-\alpha_i&\text{if }j=i,\\
				\alpha_j&\text{if }j\neq i, \alpha_j(h_i)=0\\
				\alpha_j+\alpha_i&\text{if }j\neq i, \alpha_j(h_i)\neq 0,
			\end{cases}
		\end{equation}
		where $h_i\in\lie{h}$ is the coroot corresponding to $\alpha_i$.
		More generally, these definitions also work for contragredient Lie superalgebras.
		In \cite{SerganovaSuperKacMoody} the odd reflections (and certain other maps) are used to construct a \emph{Weyl groupoid} that acts transitively on the set of Borel subalgebras.
		
		On the other hand, a (seemingly unrelated at first glance) notion of Weyl groupoid was also introduced by Heckenberger and Yamane \cite{HeckenbergerYamane} as an analogue of Weyl groups in the theory of Nichols algebras.
		Weyl groupoids in this context are constructed from (semi-)Cartan graphs, see \cite[\S 9]{HeckenbergerSchneider}.
		A (semi-)Cartan graph is an undirected graph $\cartangr$ with edges labelled by $\{1,\dots,n\}$ and a generalized Cartan matrix $A(x)$ (called the Serre matrix) for every vertex $x$, subject to certain conditions.
		In \cite[Ex.~3]{HeckenbergerYamane} examples of Cartan graphs are obtained from finite-dimensional contragredient Lie superalgebras.
		Furthermore, \cite[\S 11]{HeckenbergerSchneider} provides a different combinatorial construction of Cartan graphs for regular contragredient Lie superalgebras, using only the Cartan data.
		Roughly speaking, the vertices of the Cartan graph are the ordered bases of the roots of $\lie{g}$, the edges correspond to odd reflections, and the Serre matrices define the Serre relations in $\lie{g}$.
		By results of \cite{HeckenbergerYamane} the Weyl groupoid of a Cartan graph is a Coxeter groupoid, i.e.~it is generated by \emph{simple reflections} $(s_i)_x\colon x\to r_i(x)$ subject only to Coxeter-type relations $\id_x(s_is_j)^{m(x)_{ij}}\id_x=\id_x$ (for any possible composition).
		
		Our first main result is a formulation of the general construction of Cartan graphs and Weyl groupoids from \cite{HeckenbergerSchneider} in more convenient graphical language, see \cref{weylcartandefinitions}.
		Moreover, in \cref{contragredientcartan} we generalize the construction of Cartan graphs for finite-dimensional contragredient Lie superalgebras from \cite{HeckenbergerYamane} to regular symmetrizable contragredient Lie superalgebras, and show that this is equivalent to the combinatorial definition from \cite{HeckenbergerSchneider}.
		In particular, this implies that we actually obtained a Cartan graph.
		In comparison to \cite{HeckenbergerSchneider} we put the focus on the Borel subalgebras instead of the Cartan data, which is advantageous from the perspective of Lie theory.
		This point of view allows us to compare this notion of Weyl groupoid to other constructions in the theory of Lie superalgebras, see \cref{groupoidcomparison}.
		In \cref{weylgroupoidautos} we show that the automorphism group of an object of the Weyl groupoid $\weylgrpd$ of a contragredient Lie superalgebra coincides with its Weyl group.
		
		Our second result is an explicit description of the Cartan graphs and Weyl groupoids for the Lie superalgebras $\lie{sl}(m|n)$, $\lie{osp}(2m+1|2n)$ and $\lie{osp}(2m|2n)$.
		These Weyl groupoids were previously considered in \cite{AndruskiewitschAngionoSurvey}.
		We provide a detailed, different description in terms of partitions.
		For this we first recall some standard results about the realizations of $\lie{sl}(m|n)$, $\lie{osp}(2m+1|2n)$ and $\lie{osp}(2m|2n)$ as contragredient Lie superalgebras, which amounts to classifying all Borel subalgebras up to conjugation.
		Based on \cite[\S 3]{Musson} we describe the Borel subalgebras (up to conjugation) in terms of partitions $\lambda$ fitting in an $m\times n$-rectangle.
		Given such a partition (and an additional sign $\eps\in\{+,-\}$ in the case of $\lie{osp}(2m|2n)$) one can easily construct a Borel subalgebra $\lie{b}(\lambda)$ (resp.~$\lie{b}(\lambda,\eps)$), see \cref{constructionBorelsl,constructionBoreloddosp,constructionBorelevenosp} for details.
		To determine the Cartan graphs we also need an explicit description of the the Cartan data for all Borel subalgebras, which we compute in \cref{cartancomputation}.
		
		An observation crucial to determining the Cartan graphs of the Lie superalgebras $\lie{sl}(m|n)$, $\lie{osp}(2m+1|2n)$ and $\lie{osp}(2m|2n)$ is that the combinatorial description of Borel subalgebras in terms of partitions allows for a convenient description of the odd reflections:
		an odd reflection corresponds to adding or removing a certain box to (resp.~from) $\lambda$, see \cref{oddreflectionpartition} for details.
		This makes it very easy to describe the shape of the Cartan graph in concrete examples, see \cref{cartangraphshape}.
		In \cref{cartanmatrices} we compute the Serre matrices, which are obtained from the Cartan data computed in \cref{borelsandcartan}.
		Finally we determine the Coxeter relations in their Weyl groupoids.
		Somewhat surprisingly, these are as one would expect from the Serre matrices, see \cref{coxeterrelations}.
		\subsection*{Acknowledgements}
			We would like to thank Catharina Stroppel for extensive discussions and valuable feedback on earlier drafts.
			This work was supported by the Max Planck Institute of Mathematics (IMPRS Moduli Spaces) and the Hausdorff Center of Mathematics, which is funded by the Deutsche Forschungsgemeinschaft (DFG, German Research Foundation) under Germany's Excellence Strategy -- EXC-2047/1 -- 390685813.
			We would like to thank both institutions for their hospitality and the excellent research environment.
	\section{Cartan graphs and Weyl groupoids}
		\subsection{Definition and generalities}\label{weylcartandefinitions}
			We begin by reformulating the definitions of Cartan graphs and Weyl groupoids from \cite[\S 9]{HeckenbergerSchneider}.
			The notion of Weyl groupoids was first axiomatically introduced in \cite{HeckenbergerYamane}.
			
			Let $I$ and $J$ be sets with $|I|<\infty$.
			By an \emph{$(I,J)$-labelled graph} we mean an (undirected) graph $\cartangr$ with vertices $X$ and edges $E$ together with maps of sets $A\colon X\to J$ and $c\colon E\to I$.
			For an edge $e$ we call $c(e)\in I$ its \emph{color}.
			We draw the $i$-colored edges of an $(I,J)$-labelled graph as $x\overset{i}{\longleftrightarrow}x'$.
			The set $I$ will usually be left implicit.
			
			For a finite set $I$ let $\GCM_I(\ints)$ be the set of generalized Cartan matrices with entries indexed by $I$.
			\begin{dfn}
				A \emph{semi-Cartan graph} is an $(I,\GCM_I(\ints))$-labelled graph such that
				\begin{enumerate}[label=(CG\arabic*),leftmargin=*,series=cartangraph]
					\item\label{cg1} for every vertex $x\in X$ and every $i\in I$ there is a unique edge $e$ incident to $x$ with $c(e)=i$,
					\item\label{cg2} and $A(x)_{ij}=A(y)_{ij}$ for every edge $x\overset{i}{\longleftrightarrow}y$ and all $j\in I$.
				\end{enumerate}
				The matrices $A(x)$ are called \emph{Serre matrices}.
			\end{dfn}
			We call the matrices $A(x)$ Serre matrices since in special cases they describe Serre relations, see \cref{serrematrixexplanation}.
			
			Note that loops in semi-Cartan graphs are explicitly allowed, and in fact occur very often.
			\begin{rem}
				By \cref{cg1} we obtain involutions $r_i\colon X\to X$, sending a vertex $x$ to its neighbor along the unique $i$-colored edge at $x$.
				This recovers the axioms in \cite[Def.~9.1.1]{HeckenbergerSchneider}.
			\end{rem}
			Let $\cartangr$ be a semi-Cartan graph.
			To each vertex $x\in X$ we associate a $\ints$-lattice $\ints^I_x$ with basis $\{\alpha_i^x\mid i\in I\}$, and for $i\in I$ we define $\ints$-linear maps $s_i=(s_i)_x\colon \ints^I_x\to\ints^I_{r_i(x)}$ by $(s_i)_x(\alpha_j^x)=\alpha_j^{r_i(x)}-A(x)_{ij}\alpha_i^{r_i(x)}$.
			\begin{dfn}\label{weylgroupoiddef}
				The \emph{Weyl groupoid} $\weylgrpd$ of $\cartangr$ is the groupoid with set of objects $X$ and the morphisms generated by the $(s_i)_x\colon x\to r_i(x)$ for $i\in I$, $x\in X$.
				The composition of morphisms is given by the usual composition of $\ints$-linear maps, and two morphisms $x\to y$ are equal if and only if they agree as $\ints$-linear maps $\ints^I_x\to\ints^I_y$.
			\end{dfn}
			Observe that from \cref{cg2} we get $(s_i)_{r_i(x)}(s_i)_x=\id_x$ for all $i\in I$ and $x\in X$, and thus $\weylgrpd$ is indeed a groupoid.
			\begin{rem}\label{importanceoflabelatbasis}
				Usually one thinks of the lattices $\ints^I_x$ as lying inside a fixed ambient $\compl$-vector space of dimension $|I|$.
				Obviously the lattices $\ints^I_x$ are isomorphic  as abstract lattices, but they are usually rather different (and in particular depend on the vertex $x$) as sublattices of the ambient vector space.
				To emphasize this we will always keep track of the vertex $x$ for the lattice $\ints^I_x$ and its basis vectors $\alpha^x_i$.
			\end{rem}
			\begin{dfn}\label{definitionrealroots}
				Let $\cartangr$ be a semi-Cartan graph and $\weylgrpd$ its Weyl groupoid, and define the following sets of roots at vertex $x\in X$:
				\begin{itemize}
					\item The \emph{real roots} are
						\begin{equation*}
							\cartanroots^\real_x=\{w(\alpha_i^y)\mid y\in X, w\in\weylgrpd(y,x), i\in I\}\subseteq\ints^I_x,
						\end{equation*}
					\item the \emph{positive} (resp.~\emph{negative}) real roots are $\cartanroots^{\pm,\real}_x=\cartanroots^\real_x\cap\pm\sum_{i\in I}\nats_0\alpha_i^x$.
				\end{itemize}
			\end{dfn}
			\begin{dfn}
				A semi-Cartan graph $\cartangr$ is a \emph{Cartan graph} if
				\begin{enumerate}[resume*=cartangraph]
					\item $\cartanroots^\real_x=\cartanroots^{+,\real}_x\cup\cartanroots^{-,\real}_x$ for all $x\in X$,
					\item\label{CG4} and for all $x\in X$ and $i,j\in I$ we have
						\begin{equation*}
							(r_ir_j)^{m(x)_{ij}}(x)=x,
						\end{equation*}
						where $m(x)_{ij}=\abs{\cartanroots^\real_x\cap(\nats_0\alpha_i^x+\nats_0\alpha_j^x)}$.
				\end{enumerate}
			\end{dfn}
		\subsection{Cartan graphs for contragredient Lie superalgebras}\label{contragredientcartan}
			Now we construct a generalized Cartan graph from a contragredient Lie superalgebra.
			We begin by recalling the construction of contragredient Lie superalgebras from \cite[\S 5]{Musson}.
			A \emph{Cartan datum} is a pair $(B,\tau)$ consisting of a matrix $B\in\compl^{n\times n}$ and a \emph{parity vector} $\tau\in (\ints/2\ints)^n$ for some $n\in\nats$.
			We further fix a \emph{minimal realization} of $B$, i.e.\ we choose a vector space $\lie{h}$ of dimension $2n-\rk(B)$ with linearly independent roots $\alpha_i\in\lie{h}^*$ and coroots $h_i\in\lie{h}$ for $1\leq i\leq n$ such that $\alpha_j(h_i)=a_{ij}$.
			This data can be used to construct a Lie superalgebra $\tilde{\lie{g}}(B,\tau)$ with \emph{Chevalley generators} $e_i$ and $f_i$ (of parity $\tau_i$) subject to the following relations which are analogous to the defining relations of Kac--Moody Lie algebras:
			\begin{gather*}
				[h,h']=0\quad\text{for all }h,h'\in\lie{h},\\
				[h, e_i]=\alpha_i(h)e_i\quad\text{for all }h\in\lie{h},\\
				[h, f_i]=-\alpha_i(h)f_i\quad\text{for all }h\in\lie{h},\\
				[e_i, f_j]=\delta_{i,j}h_i.
			\end{gather*}
			As usual, we call $\lie{h}$ the \emph{Cartan subalgebra}.
			Note that $\lie{h}$ is concentrated in even degree, and is abelian.
			Let $\lie{g}(B,\tau)=\tilde{\lie{g}}(B,\tau)/\lie{r}$, where $\lie{r}$ is the maximal ideal of $\tilde{\lie{g}}$ intersecting $\lie{h}$ trivially.
			From the construction of $\lie{g}(B,\tau)$ it is clear that rescaling the rows of $B$ by non-zero scalars results in isomorphic Lie superalgebras.

			In the following we will restrict ourselves to regular symmetrizable Cartan data in the sense of \cite[Def.~4.8]{HoytSerganova} (up to rescaling of rows).
			A Cartan datum $(B,\tau)$ (with the same notation as in \cref{borelsandcartan}) is called \emph{symmetrizable} if the matrix $B$ is symmetrizable.
			We call $(B,\tau)$ \emph{regular} if
			\begin{itemize}
				\item $B$ has no zero rows and is indecomposable (i.e.~does not split into blocks $B=\begin{smallpmatrix}B_1&0\\0&B_2\end{smallpmatrix}$),
				\item $b_{ij}=0$ if and only if $b_{ji}=0$,
				\item if $\tau_i=\even$ then $b_{ii}\neq 0$ and $\frac{2b_{ij}}{b_{ii}}\in\ints_{\leq 0}$ for all $j$, and
				\item if $\tau_i=\odd$ and $b_{ii}\neq 0$, then $\frac{2b_{ij}}{b_{ii}}\in 2\ints_{\leq 0}$ for all $j$.
			\end{itemize}
			Regularity of $(B,\tau)$ implies that $\ad_{e_i}\colon \lie{g}(B,\tau)\to\lie{g}(B,\tau)$ is locally nilpotent for all $i$, see \cite[\S 2]{SerganovaSuperKacMoody}.
			
			We call the Lie superalgebra $\lie{g}(B,\tau)$ \emph{regular} if any Cartan datum $(B',\tau')$ with $\lie{g}(B',\tau')\cong\lie{g}(B,\tau)$ is regular.
			In particular $\lie{sl}(m|n)$, $\lie{osp}(2m+1|2n)$ and $\lie{osp}(2m|2n)$ are regular, which follows from the computation of all possible Cartan data (up to rescaling of rows) in \cref{cartancomputation}.
			
			The following definition is \cite[Def.~11.2.4]{HeckenbergerSchneider}.
			It simplifies slightly since we have restricted ourselves to regular Cartan data.
			\begin{dfn}
				\label{associatedCMdef}
				For a regular Cartan datum $(B,\tau)$, the \emph{Serre matrix} $A^{B,\tau}$ is the $n\times n$-matrix with entries
				\begin{equation*}
					a^{B,\tau}_{ij}=\begin{cases}
						2&\text{if }i=j,\\
						0&\text{if }i\neq j, b_{ij}=0,\\
						-1&\text{if }i\neq j, b_{ij}\neq 0, b_{ii}=0,\\
						\frac{2b_{ij}}{b_{ii}}&\text{if }i\neq j, b_{ij}\neq 0, b_{ii}\neq 0.
					\end{cases}
				\end{equation*}
			\end{dfn}
			Observe that due to regularity of $(B,\tau)$ the Serre matrix $A^{B,\tau}$ is a generalized Cartan matrix.
			Also note that in particular $A^{B,\tau}$ is invariant under multiplication of rows of $B$ by non-zero scalars.
			\begin{rem}
				The definition of $A^{B,\tau}$ can be seen as a normalization of $B$, bringing it as close as possible to the form of a generalized Cartan matrix.
				The rows with non-zero diagonal entries are rescaled so that the diagonal entries become $2$, while for odd isotropic roots (those with $b_{ii}=0$) we replace all non-zero off-diagonal entries by $-1$.
				In particular if $B$ is a generalized Cartan matrix, then $A^{B,\tau}=B$.
			\end{rem}
			\begin{rem}
				\label{serrematrixexplanation}
				We call the matrix $A^{B,\tau}$ a Serre matrix since it is used to formulate Serre relations for the contragredient Lie superalgebra $\lie{g}(B,\tau)$.
				Explicitly we have
				\begin{equation*}
					(\ad e_i)^{1-a_{ij}}(e_j)=0,
				\end{equation*}
				see for instance \cite[Lem.~5.2.13]{Musson}.
			\end{rem}
			For the rest of the section fix a regular symmetrizable contragredient Lie superalgebra $\lie{g}=\lie{g}(B,\tau)$ with $B\in\compl^{n\times n}$ and let $I=\{1,\dots,n\}$.
			
			As a direct consequence of the construction, $\lie{g}$ admits a root space decomposition $\lie{g}=\lie{h}\oplus\bigoplus_{\alpha\in\lie{h}^*} \lie{g}_\alpha$.
			An \emph{ordered root base} of $\lie{g}$ (simply called \emph{base} in \cite[\S 3]{SerganovaSuperKacMoody}) is a sequence $\Pi'=(\beta_1,\dots,\beta_n)$ of linearly independent roots such that there are $e'_i\in\lie{g}_{\beta_i}$, $f'_i\in\lie{g}_{-\beta_i}$ that together with $\lie{h}$ generate $\lie{g}$ and satisfy $[e'_i,f'_j]=0$ for $i\neq j$.
			The $\beta_i$ are called \emph{simple roots}, and every root can be written as a $\ints$-linear combination of the simple roots such that all the coefficients are either non-negative or non-positive.
			
			A choice of Chevalley generators $e'_i$, $f'_i$ for an ordered root base determines a Cartan datum $(B',\tau')$ by $\tau'_i=|e'_i|$ and $b'_{ij}=\beta_j(h'_i)$ with $h'_i=[e'_i,f'_i]\in\lie{h}$. 
			Observe that the rank of $B$ and $B'$ coincide and thus this gives rise to an isomorphism $\lie{g}\cong\lie{g}(B',\tau')$.
			Note that the $e'_i$ and $f'_i$ are unique up to scalar, so the Cartan datum $(B',\tau')$ is unique up to rescaling of the rows of $B'$.
			
			Let $X$ be a labelling set for the ordered root bases of $\lie{g}$.
			For each $x\in X$ fix Chevalley generators corresponding to the simple roots in $\Pi(x)$ and let $(B(x),\tau(x))$ be the Cartan datum obtained from these.
			\begin{dfn}
				\label{contragredientcartandef}
				The \emph{Cartan graph} $\cartangr_\lie{g}$ of $\lie{g}$ is the $(I,\GCM_I(\ints))$-labelled graph consisting of
				\begin{itemize}
					\item the set of vertices $X$,
 					\item edges according to the rules:
						\begin{itemize}
							\item For each odd isotropic root $\alpha^x_i\in\Pi(x)$ and $\Pi(x')$ obtained from $\Pi(x)$ by an odd reflection at $\alpha^x_i$ (as defined in \cref{oddreflectionformula}), there is an edge $x\overset{i}{\longleftrightarrow}x'$ of color $i\in I$.
							\item For each root $\alpha^x_i\in\Pi(x)$ that is not odd isotropic there is an edge $x\overset{i}{\longleftrightarrow}x$ of color $i\in I$.
						\end{itemize}
					\item the Serre matrices $A(x)=A^{B(x),\tau(x)}$.
				\end{itemize}
				The \emph{Weyl groupoid} of $\lie{g}$ is the Weyl groupoid of $\cartangr_\lie{g}$.
			\end{dfn}
			\begin{rem}
				A priori the Cartan graph depends on the choice of Chevalley generators.
				However, as mentioned above, these are unique up to scalar.
				Rescaling the Chevalley generators corresponds to rescaling the rows of $B(x)$, and this does not affect the Serre matrix $A^{B(x),\tau(x)}$.
				Thus $\cartangr_\lie{g}$ is well-defined.
			\end{rem}
			It is not obvious that $\cartangr_\lie{g}$ is indeed a Cartan graph (or even a semi-Cartan graph) as the name suggests, this will be checked in \cref{itsacartan} below.
			For this we first have to show that the above definition of $\cartangr$ is equivalent to the construction from \cite[Def.~11.2.6]{HeckenbergerSchneider}.
			The idea to associate a Weyl groupoid to a contragredient Lie superalgebra in this way goes back to \cite[Ex.~3]{HeckenbergerYamane}.
			\begin{rem}
				\label{identifylattices}
				For the Cartan graph $\cartangr_\lie{g}$ the basis vectors $\alpha^x_i$ from \cref{weylgroupoiddef} can be identified with the simple roots in the ordered root base $\Pi(x)\subseteq\lie{h}^*$, and in general many of these coincide when viewed as elements of $\lie{h}^*$.
				However, the simple roots in different ordered root bases should always be distinguished.
				In terms of the Cartan graph, this corresponds to distinguishing the simple roots at different vertices as explained in \cref{importanceoflabelatbasis}.
			\end{rem}
			To see that $\cartangr_\lie{g}$ is indeed the same object as the one constructed in \cite{HeckenbergerSchneider} we need to determine the effect of odd reflections on a symmetric Cartan datum.
			Similar formulas (without the symmetry assumption) can also be found in \cite[\S 2.2.4]{GHS} and \cite[\S 4]{HoytSerganova} (for further context see also \cite{AndruskiewitschAngionoSurvey}).
			\begin{prop}
				\label{oddreflectioncartan}
				Let $\Pi$ be an ordered root basis of $\lie{g}$ and $\alpha_i\in\Pi$ odd isotropic.
				Let $\Pi'$ be obtained from $\Pi$ by odd reflection at $\alpha_i$.
				Suppose there are Chevalley generators for $\Pi$ such that in the resulting Cartan datum $(B,\tau)$ the matrix $B$ is symmetric.
				Then there is a choice of Chevalley generators for $\Pi'$ such that corresponding Cartan datum $(B',\tau')$ is given by
				\begin{align*}
					b'_{jk}&=\begin{cases}
						-b_{jk}&\text{if }j=i\text{ or }k=i,\\
						b_{jk}&\text{if }j,k\neq i, b_{ji}b_{ik}=0,\\
						b_{jk}+b_{ik}+b_{ji}&\text{if }j,k\neq i, b_{ji}b_{ik}\neq 0,
					\end{cases}&
					\tau'_j&=\begin{cases}
						\tau_j&\text{if }b_{ij}=0,\\
						\tau_j+\odd&\text{if }b_{ij}\neq 0.
					\end{cases}
				\end{align*}
				In particular any Cartan datum obtained from a symmetrizable Cartan datum under odd reflections is symmetrizable.
			\end{prop}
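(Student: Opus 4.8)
The plan is to produce explicit Chevalley generators for $\Pi'$ adapted to the three cases of the odd reflection formula \eqref{oddreflectionformula}, to compute the resulting coroots $h'_j=[e'_j,f'_j]$, and then to read off $b'_{jk}=\beta'_k(h'_j)$ and $\tau'_j=\abs{e'_j}$ by substitution. Write $\Pi=(\beta_1,\dots,\beta_n)$ with $\beta_i=\alpha_i$, let $e_k,f_k$ be the fixed Chevalley generators with $h_k=[e_k,f_k]$, so that $b_{k\ell}=\beta_\ell(h_k)$ and $b_{ii}=0$ since $\alpha_i$ is odd isotropic. Guided by \eqref{oddreflectionformula} I set
\begin{align*}
	e'_i &= f_i, & f'_i &= -e_i,\\
	e'_j &= e_j, & f'_j &= f_j && \text{if } j\neq i,\ b_{ij}=0,\\
	e'_j &= [e_i,e_j], & f'_j &= c_j[f_i,f_j] && \text{if } j\neq i,\ b_{ij}\neq 0,
\end{align*}
with scalars $c_j$ fixed below. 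By construction $e'_j\in\lie{g}_{\beta'_j}$ and $f'_j\in\lie{g}_{-\beta'_j}$ for the reflected roots $\beta'_j$, and the parities are $\abs{e'_i}=\tau_i$, $\abs{e'_j}=\tau_j$ when $b_{ij}=0$, and $\abs{e'_j}=\tau_i+\tau_j=\tau_j+\odd$ when $b_{ij}\neq 0$, which already gives the asserted formula for $\tau'$.

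\textbf{The elements are Chevalley generators.} In the third case one first checks non-vanishing: since $[f_i,e_i]=h_i$, the graded Jacobi identity gives
\begin{equation*}
	[f_i,[e_i,e_j]]=[[f_i,e_i],e_j]=[h_i,e_j]=b_{ij}e_j\neq 0,
\end{equation*}
the second Jacobi term vanishing because $[f_i,e_j]=0$; hence $[e_i,e_j]\neq 0$, and symmetrically $[f_i,f_j]\neq 0$. As the simple root space $\lie{g}_{\beta'_j}$ of the ordered root base $\Pi'$ is one-dimensional, each nonzero $e'_j$ (resp.\ $f'_j$) is a scalar multiple of a Chevalley generator of $\Pi'$; thus, for a suitable choice of the $c_j$ and of the sign on $f'_i$, the family $(e'_j,f'_j)_j$ is itself a valid choice of Chevalley generators for $\Pi'$, and the Cartan datum it defines is the one we must compute.

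\textbf{Computing the coroots.} For $j=i$, using that brackets of odd elements are symmetric, $h'_i=[f_i,-e_i]=-[e_i,f_i]=-h_i$, and for $j\neq i$ with $b_{ij}=0$ clearly $h'_j=h_j$. The main computation is the third case: a direct application of the graded Jacobi identity, using $[e_i,f_j]=[e_j,f_i]=0$, $[e_i,f_i]=h_i$, $[h_i,e_j]=b_{ij}e_j$ and $[h_j,e_i]=b_{ji}e_i$, gives $[[e_i,e_j],f_i]=(-1)^{\tau_j}b_{ij}e_j$ and $[[e_i,e_j],f_j]=-b_{ji}e_i$, whence
\begin{equation*}
	[[e_i,e_j],[f_i,f_j]]=(-1)^{\tau_j}\bigl(b_{ij}h_j+b_{ji}h_i\bigr).
\end{equation*}
Here the symmetry hypothesis $b_{ij}=b_{ji}$ is essential: it collapses the right-hand side to $(-1)^{\tau_j}b_{ij}(h_i+h_j)$, so the normalization $c_j=(-1)^{\tau_j}/b_{ij}$ produces the clean coroot $h'_j=h_i+h_j$. (Without symmetry the two coefficients differ, as in the formulas of \cite[\S 2.2.4]{GHS}.)

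\textbf{Conclusion and the symmetrizable case.} With $h'_i=-h_i$, $h'_j=h_j$ (for $b_{ij}=0$) and $h'_j=h_i+h_j$ (for $b_{ij}\neq 0$), I evaluate $b'_{jk}=\beta'_k(h'_j)$ by inserting the reflected roots $\beta'_k$ from \eqref{oddreflectionformula} and using $\beta_\ell(h_k)=b_{k\ell}$ and $b_{ii}=0$. Splitting into the subcases $k=i$, $b_{ik}=0$, and $b_{ik}\neq 0$ reproduces exactly the three cases of the claimed formula for $b'_{jk}$, and the same formulas (together with $b_{ij}=b_{ji}$) show that $B'$ is again symmetric. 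For the final assertion, if $(B,\tau)$ is only symmetrizable, one rescales its rows to a symmetric matrix without changing $\lie{g}(B,\tau)$; the symmetric case then yields a symmetric reflected datum, and since any two Cartan data for $\Pi'$ agree up to rescaling of rows, the reflected datum of $(B,\tau)$ is symmetric up to row rescaling, i.e.\ symmetrizable. Iterating handles an arbitrary sequence of odd reflections. The main obstacle is the sign bookkeeping in the graded Jacobi identity in the third case; the remaining steps are a finite, routine case check.
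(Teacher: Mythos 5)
Your proposal is correct and follows essentially the same route as the paper: the same choice of Chevalley generators $e'_i=f_i$, $f'_i=-e_i$, $e'_j=[e_i,e_j]$, $f'_j=\tfrac{(-1)^{\tau_j}}{b_{ij}}[f_i,f_j]$, the same computation of the coroots $h'_j\in\{-h_i,\,h_j,\,h_i+h_j\}$, and the same appeal to one-dimensionality of the relevant root spaces to justify that this is the unique choice up to scalar. You merely spell out the graded Jacobi computation and the symmetrizable reduction in more detail than the paper does.
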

			\begin{proof}
				Recall from \cref{oddreflectionformula} that the simple roots in $\Pi'$ are
				\begin{equation*}
					\{-\alpha_i\}\cup\{\alpha_j\mid j\neq i, b_{ij}=0\}\cup\{\alpha_j+\alpha_i\mid j\neq i, b_{ij}\neq 0\}.
				\end{equation*}
				One possible choice for the corresponding Chevalley generators is
				\begin{align*}
					e'_j&=\begin{cases}
						f_i&\text{if }j=i,\\
						e_j&\text{if }j\neq i, b_{ij}=0,\\
						[e_i,e_j]&\text{if }j\neq i, b_{ij}\neq 0,
					\end{cases}&
					f'_j&=\begin{cases}
						-e_i&\text{if }j=i,\\
						f_j&\text{if }j\neq i, b_{ij}=0,\\
						\frac{1}{b_{ij}}(-1)^{\tau_j}[f_i,f_j]&\text{if }j\neq i, b_{ij}\neq 0.
					\end{cases}
				\end{align*}
				Observe that this choice of root vectors is unique up to scalar since the root spaces for simple roots and sums of two simple roots are $1$-dimensional, and therefore any other choice of root vectors results in a rescaling of the matrix $B'$.
				Our choice of scaling ensures that $B'$ will be symmetric.

				From the root vectors we compute (using in particular that $e_i$ and $f_i$ are odd, and that $B$ is symmetric)
				\begin{equation*}
					h'_j=[e'_j,f'_j]=\begin{cases}
						-h_i&\text{if }j=i,\\
						h_j&\text{if }j\neq i, b_{ij}=0,\\
						h_i+h_j&\text{if }j\neq i, b_{ij}\neq 0,
					\end{cases}
				\end{equation*}
				and this implies
				\begin{equation*}
					b'_{jk}=\begin{cases}
						-b_{jk}&\text{if }j=i\text{ or }k=i,\\
						b_{jk}&\text{if }j,k\neq i, b_{ji}b_{ik}=0,\\
						b_{jk}+b_{ik}+b_{ji}&\text{if }j,k\neq i, b_{ji}b_{ik}\neq 0
					\end{cases}
				\end{equation*}
				as claimed.
				Finally, $\tau'_j=|e'_j|=|e_j|=\tau_j$ unless $b_{ij}\neq 0$, in which case $\tau'_j=|e'_j|=|e_j|+|e_i|=|e_j|+\odd=\tau_j+\odd$.
			\end{proof}
			\begin{cor}
				\label{itsacartan}
				Let $(B,\tau)$ be a regular symmetrizable Cartan datum, $\lie{g}=\lie{g}(B,\tau)$, and $\tilde{B}$ a symmetrization of $B$.
				Then $\cartangr_\lie{g}$ is precisely the object constructed from $(\tilde{B},\tau)$ in \cite[Def.~11.2.6]{HeckenbergerSchneider}.
				In particular $\cartangr_\lie{g}$ is a Cartan graph.
			\end{cor}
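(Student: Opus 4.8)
The claim splits into two parts: identifying $\cartangr_\lie{g}$ with the graph built from $(\tilde B,\tau)$ in \cite[Def.~11.2.6]{HeckenbergerSchneider}, and deducing from this identification that $\cartangr_\lie{g}$ is a Cartan graph. The plan for the first part is to compare the two $(I,\GCM_I(\ints))$-labelled graphs piece by piece, matching their vertices, edges and Serre matrices, with \cref{oddreflectioncartan} providing the link between the geometric odd reflections and the combinatorial rule used by Heckenberger--Schneider. The second part is then immediate, since \cite[\S 11]{HeckenbergerSchneider} shows that their object satisfies the Cartan graph axioms.

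First I would recall that the Heckenberger--Schneider construction is purely combinatorial: starting from the symmetric datum $(\tilde B,\tau)$ one generates new Cartan data by iterating the odd-reflection formula for $b'_{jk}$ and $\tau'_j$, takes these data as vertices, draws an $i$-colored edge for each reflection at an odd isotropic root and an $i$-colored loop otherwise, and attaches to each vertex the Serre matrix of \cref{associatedCMdef}. The essential observation is that this combinatorial rule is \emph{exactly} the transformation computed in \cref{oddreflectioncartan}. Now, since odd reflections act transitively on the ordered root bases of $\lie{g}$ (this is the transitive action of the Weyl groupoid in \cite{SerganovaSuperKacMoody}), every base $\Pi(x)$ is reached from the initial one by a chain of odd reflections; applying \cref{oddreflectioncartan} along this chain identifies the symmetric representative of $(B(x),\tau(x))$ with the combinatorial datum produced by Heckenberger--Schneider. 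This gives the bijection of vertex sets, under which the edges agree tautologically, because \cref{contragredientcartandef} and \cite{HeckenbergerSchneider} both draw an edge precisely at odd isotropic roots and a loop otherwise. For the Serre matrices I would use that the formula in \cref{associatedCMdef} is invariant under rescaling the rows of $B(x)$; hence the matrix $A(x)=A^{B(x),\tau(x)}$ attached to $x$ in $\cartangr_\lie{g}$ coincides with the one computed from the symmetric representative, which by the previous step is the Heckenberger--Schneider Serre matrix.

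The step I expect to be most delicate is the consistency of the vertex identification, namely that the symmetric representatives assemble into a coherent system as one moves along the edges. This is precisely what \cref{oddreflectioncartan} secures: the explicit Chevalley generators exhibited in its proof send a symmetric Cartan datum to a symmetric one, so that starting from $(\tilde B,\tau)$ one never leaves the class of symmetric data and no inconsistent rescaling is forced. Some care is also needed with the labelling emphasized in \cref{importanceoflabelatbasis,identifylattices}, so that geometrically distinct ordered root bases are kept distinct on both sides rather than being collapsed whenever their Cartan data happen to coincide. Once vertices, edges and Serre matrices are matched, $\cartangr_\lie{g}$ is literally the object of \cite[Def.~11.2.6]{HeckenbergerSchneider}, and the final assertion that it is a Cartan graph follows from the results of \cite[\S 11]{HeckenbergerSchneider}.
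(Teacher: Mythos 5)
Your overall route is the same as the paper's: the heart of the argument is that \cref{oddreflectioncartan} shows the geometric odd reflections transform a symmetric Cartan datum by exactly the combinatorial rule of \cite[Lem.~11.2.7]{HeckenbergerSchneider}, that the Serre matrix of \cref{associatedCMdef} is insensitive to rescaling rows (so the choice of Chevalley generators, and hence of symmetric representative, does not matter), and that the Cartan graph axioms are then supplied by \cite[Thm.~11.2.10]{HeckenbergerSchneider}. Two remarks, one of which is a genuine error.

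The error: you justify the matching of vertex sets by asserting that ``odd reflections act transitively on the ordered root bases of $\lie{g}$'', citing Serganova's transitivity theorem. This is false, and the paper itself says so: by \cref{borelevenpart} odd reflections preserve the even part of the associated Borel subalgebra, and by \cref{rootsordering} they cannot realize a nontrivial reordering of the simple roots, so $\cartangr_\lie{g}$ decomposes into many components with no edges between them. Serganova's transitivity is for her larger groupoid, which also contains even reflections and base-change isomorphisms. Consequently your chain ``every base $\Pi(x)$ is reached from the initial one by odd reflections, and applying \cref{oddreflectioncartan} along this chain identifies the data'' does not produce the claimed bijection of vertex sets; at best it identifies the connected component of the initial base. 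The comparison of vertices has to be made without appealing to transitivity (e.g.\ by observing that both constructions carry the same involutions $r_i$ and comparing the resulting data componentwise). Separately, and more minor: the paper's proof also records that Heckenberger--Schneider work with the derived subalgebra $\lie{g}'(B,\tau)$ generated by the $e_i$ and $f_i$, whose Cartan subalgebra is spanned by the $h_i$ alone; one should note, as the paper does, that this discrepancy does not affect the construction of the Cartan graph. Your proposal omits this point entirely.
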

			\begin{proof}
				In \cite{HeckenbergerSchneider}, the Weyl groupoid is defined using the Lie superalgebra $\lie{g}'(B,\tau)\subseteq\lie{g}(B,\tau)$ generated by the $e_i$ and $f_i$.
				The only difference is that the Cartan subalgebra of $\lie{g}'(B,\tau)$ is just spanned by the $h_i$, and therefore this does not affect the construction of the Weyl groupoid.
				
				The definitions then agree by the observation that the Serre matrix is invariant under rescaling of rows of the matrix $B$, and that the effect of odd reflections on a symmetric Cartan datum determined in \cref{oddreflectioncartan} agrees with the formulas from \cite[Lem.~11.2.7]{HeckenbergerSchneider}.
				That $\cartangr_\lie{g}$ is a Cartan graph is then \cite[Thm.~11.2.10]{HeckenbergerSchneider}.
			\end{proof}
		\subsection{Automorphisms}
			We would like to compare the automorphism group of an object of the Weyl groupoid $\weylgrpd$ of a contragredient Lie superalgebra $\lie{g}$ with its Weyl group.
			
			In \cite{GHS} the Weyl group of a connected component of $\weylgrpd$ is introduced.
			It follows from the observations in \cref{groupoidcomparison} that the roots in a connected component of the Cartan graph $\cartangr_\lie{g}$ coincide with the real roots in a connected component of the spine of the root groupoid, as defined in \cite[Def.~4.1.2]{GHS}.
			By \cite[Prop.~4.3.12]{GHS} the Weyl group of a connected component of $\weylgrpd$ is generated by the reflections at non-isotropic roots that appear as simple roots in some ordered root base in this connected component.
			
			In \cite[\S 4]{SerganovaSuperKacMoody} the Weyl group of $\lie{g}$ is defined as the subgroup of $\GL(\lie{h}^*)$ generated by all reflections at all principal even roots of $\lie{g}$, where an even root $\alpha$ is \emph{principal} if either $\alpha$ or $\frac{1}{2}\alpha$ appears as a simple root for $\lie{g}$ in some ordered root base.
			Note that this definition does not depend on the connected component of $\cartangr_\lie{g}$.
			\begin{prop}
				\label{weylgroupoidautos}
				Let $\lie{g}$ be a contragredient Lie superalgebra and $\weylgrpd$ its Weyl groupoid.
				For any object $x\in\weylgrpd$ the group $\Aut_\weylgrpd(x)$ is isomorphic to the Weyl group $W_x$ of the connected component containing $x$.
				
				In particular if for every even root $\alpha$ either $\alpha$ or $\frac{1}{2}\alpha$ appears in an ordered root base in this connected component, then $\Aut_\weylgrpd(\lie{b})$ is isomorphic to the Weyl group of $\lie{g}$.
			\end{prop}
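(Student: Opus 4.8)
The plan is to realize $\Aut_\weylgrpd(x)$ faithfully as a group of linear automorphisms of the real root lattice inside $\lie{h}^*$ and then to identify its image with the reflection group $W_x$. Fix the connected component $\cartangr^0$ of $\cartangr_\lie{g}$ containing $x$. Since by \cref{oddreflectionformula} an odd reflection preserves the sublattice $\ol{Q}=\sum_i\ints\alpha_i\subseteq\lie{h}^*$ spanned by the simple roots, this $\ol{Q}$ is constant along $\cartangr^0$; write $\iota_y\colon\ints^I_y\xrightarrow{\sim}\ol{Q}$, $\alpha^y_i\mapsto\beta^y_i$, for the identification of \cref{identifylattices} at a vertex $y$. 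The first step is to record how each generating step acts after this identification. Comparing the formula $(s_i)_y(\alpha^y_j)=\alpha^{r_i(y)}_j-A(y)_{ij}\alpha^{r_i(y)}_i$ from \cref{weylgroupoiddef} with \cref{oddreflectionformula}, and using the shape of $A(y)$ from \cref{associatedCMdef}, one checks: if $\alpha^y_i$ is odd isotropic then $\iota_{r_i(y)}\circ(s_i)_y\circ\iota_y^{-1}=\id_{\ol{Q}}$, i.e.\ the step merely relabels simple roots; and if $\alpha^y_i$ is not odd isotropic (so $r_i(y)=y$), then this composite is the reflection $s_{\beta^y_i}$, where $s_\gamma(\mu)=\mu-\mu(\gamma^\vee)\gamma$ for the normalized coroot $\gamma^\vee$, since then $A(y)_{ij}=\beta^y_j(\gamma^\vee)$ for $\gamma=\beta^y_i$.

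Using this I would introduce the homomorphism $\rho\colon\Aut_\weylgrpd(x)\to\GL(\ol{Q})$, $w\mapsto\iota_x\circ w\circ\iota_x^{-1}$. By \cref{weylgroupoiddef} distinct morphisms are distinct $\ints$-linear maps, so $\rho$ is injective, and by functoriality $\rho(w)$ is the composite of the realizations of the simple-reflection steps in any factorization of the loop $w$; by the previous paragraph this is a product of identities and of reflections $s_\gamma$ at non-isotropic simple roots $\gamma$ of $\cartangr^0$. Hence $\rho(\Aut_\weylgrpd(x))$ is contained in the group generated by such reflections, which is $W_x$ by \cite[Prop.~4.3.12]{GHS}. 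For the reverse inclusion I would transport: for $\gamma=\beta^y_i$ with $y\in\cartangr^0$ non-isotropic, the loop $(s_i)_y\in\Aut_\weylgrpd(y)$ realizes as $s_\gamma$, and conjugating by any $v\in\weylgrpd(y,x)$ gives an element of $\Aut_\weylgrpd(x)$ realizing $s_{\rho(v)\gamma}$. As $y$, $i$ and $v$ vary, $\rho(v)\gamma$ runs through exactly the non-isotropic real roots at $x$ of \cref{definitionrealroots}, which are the $W_x$-saturation of the simple roots of $\cartangr^0$; the corresponding reflections already generate $W_x$. Thus $\rho(\Aut_\weylgrpd(x))=W_x$ as subgroups of $\GL(\ol{Q})$. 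Since $W_x$ is generated by reflections at roots of $\ol{Q}$ and permutes the spanning set of real roots, it acts faithfully on $\ol{Q}$, so $\rho$ is the desired isomorphism $\Aut_\weylgrpd(x)\cong W_x$.

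For the supplementary claim I would match the generators of $W_x$ with the principal even reflections generating the Weyl group $W$ of $\lie{g}$ from \cite[\S 4]{SerganovaSuperKacMoody}. Every non-isotropic simple root $\gamma$ of $\cartangr^0$ gives a principal even reflection, because $s_\gamma=s_{2\gamma}$ and $2\gamma$ (resp.\ $\gamma$) is an even principal root when $\gamma$ is odd (resp.\ even); conversely, under the stated hypothesis every principal even root $\alpha$ has $\alpha$ or $\tfrac12\alpha$ appearing in $\cartangr^0$, and that simple root is non-isotropic with reflection equal to $s_\alpha$. Hence the two generating sets span the same subgroup of $\GL(\lie{h}^*)$, giving $\Aut_\weylgrpd(\lie{b})\cong W$.

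I expect the main obstacle to be the image computation of the second paragraph: verifying carefully that the odd-reflection steps collapse to the identity on $\ol{Q}$ (so that only genuine reflections survive in $\rho(w)$), and then pushing the reverse inclusion through the transport argument while controlling which reflections are forced to appear; the faithfulness of $W_x$ on $\ol{Q}$ is the remaining delicate point, and I would treat it through the action on the spanning set of real roots provided by the root groupoid framework.
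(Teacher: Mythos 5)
Your proposal is correct and follows essentially the same route as the paper: identify the lattices $\ints^I_y$ with the root lattice in $\lie{h}^*$, observe that the odd isotropic generators act as the identity (a mere base change) so that only reflections at non-isotropic simple roots survive, and then match these with the generators of $W_x$ via \cite[Prop.~4.3.12]{GHS}. You merely make explicit two points the paper leaves implicit (injectivity of the realization map and the transport/conjugation argument for the reverse inclusion), which is a welcome but not substantively different elaboration.
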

			\begin{proof}
				By definition the Weyl group $W_x$ is a subgroup of $\GL(\lie{h}^*)$.
				On the other hand, by identifying the $\ints$-lattices in the definition of $\weylgrpd$ with the $\ints$-lattice in $\lie{h}^*$ spanned by the roots as in \cref{identifylattices} we can also see $\Aut_\weylgrpd(x)$ as a subgroup of $\GL(\lie{h}^*)$.
				We claim that the respective generators of these groups act by the same reflections on $\lie{h}^*$.
				
				For $x'\in\weylgrpd$ and an odd isotropic simple root $\alpha^{x'}_i\in\Pi(x')$ let $r_i(\Pi(x'))$ the ordered root base obtained from $\Pi(x')$ by odd reflection at $\alpha^{x'}_i$.
				By construction the corresponding generator $(s_i)_{x'}$ of $\weylgrpd$ only does an explicit base change between the bases of $\lie{h}^*$ given by the simple roots $\Pi(x')$ and $r_i(\Pi(x'))$, and hence acts as the identity map on $\lie{h}^*$.
				But this means that $\Aut_\weylgrpd(x)$ is generated by the reflections at all non-isotropic roots that appear as a simple root in some ordered root base.
				As the formulas defining the reflections are the same in both cases (see \cref{weylgroupoiddef} and \cite[\S 4]{SerganovaSuperKacMoody}, \cite[Eq.~(7)]{GHS}) we see that $\Aut_\weylgrpd(x)\subseteq W_x$.
				
				The converse inclusion is clear by definition of $W_x$.
			\end{proof}
			\begin{rem}
				From \cref{weylgroupoidautos} it follows that the real roots of $\weylgrpd$ in the sense of \cref{definitionrealroots} are the same as the real roots of $\lie{g}$.
			\end{rem}
		\subsection{Components of the Cartan graph}
			\label{cartancomponents}
			In general, the Cartan graph of a contragredient Lie superalgebra will have many connected components, see \cref{borelevenpart} below.
			However in some cases it is enough to consider only one of these.
			\begin{rem}
				\label{borelevenpart}
				An ordered root base $\Pi'$ of $\lie{g}$ determines a decomposition $\lie{g}=\lie{n}'^-\oplus\lie{h}\oplus\lie{n}'^+$ into a positive and negative part with respect to its simple roots.
				By slight abuse of language we call $\lie{b}'=\lie{h}\oplus\lie{n}'^+$ a \emph{Borel subalgebra} of $\lie{g}$.
				
				Since odd reflections do not change the even part of a Borel subalgebra, the Cartan graph of a contragredient Lie superalgebra splits into several components without edges between them.
				If the Borel subalgebras with the same even part represent all conjugacy classes of Borel subalgebras, then these components all look the same and we restrict our attention to one of these components.
				This is for instance the case for $\lie{sl}(m|n)$, $\lie{osp}(2m+1|2n)$ and $\lie{osp}(2m|2n)$, see e.g.~\cite[\S 3.1]{Musson}.
			\end{rem}
			\begin{rem}
				\label{rootsordering}
				From \cite[Thm.~9.3.5]{HeckenbergerSchneider} it follows that it is possible to order the simple roots consistently under odd reflections in the sense that for an ordered root base $\Pi=(\alpha_1,\dots,\alpha_n)$ and a non-trivial reordering $\Pi'$ of $\Pi$ it is impossible to obtain $\Pi'$ from $\Pi$ by odd reflections.
				Therefore the Cartan graph $\cartangr_\lie{g}$ decomposes into $n!$ identical (up to renumbering of edges) components without edges between them.
				However, this uses that $\cartangr_\lie{g}$ is a Cartan graph, and therefore we cannot choose a consistent ordering of the simple roots a priori.
				As far as we know, a consistent ordering of the simple roots cannot be found purely in terms of root combinatorics of Lie superalgebras,  although its existence is a purely Lie-theoretical question.
			\end{rem}
		\subsection{Relation to other notions of Weyl groupoids}
			\label{groupoidcomparison}
			There are several constructions called \emph{Weyl groupoid} in the literature.
			Our definition of the Weyl groupoid $\weylgrpd$ generalizes the construction from \cite{HeckenbergerYamane}.
			The relation to the other notions is as follows.
			
			In \cite{SerganovaSuperKacMoody} Serganova introduced another notion of \emph{Weyl groupoid $\mathcal{C}$} whose objects are Cartan data $(B,\tau)$ and whose morphisms are isomorphisms $\lie{g}(B,\tau)\to\lie{g}(B',\tau')$ of the associated contragredient Lie superalgebras that preserve the Cartan subalgebra.
			In virtue of \cite[Thm.\ 4.14]{SerganovaSuperKacMoody} this compares to $\weylgrpd$ as follows.
			\begin{comp}
				The Weyl groupoid $\weylgrpd$ is the subgroupoid of the component of $\mathcal{C}$ containing $\lie{g}(B,\tau)$, obtained by forgetting all morphisms corresponding to rescaling rows of the matrices $B$.
			\end{comp}
			One could say that this is a restriction to the essentially important information as rescaling rows only amounts to different choices of Chevalley generators.

			In a recent preprint \cite{GHS}, Gorelik, Hinich and Serganova constructed a different version of a Weyl groupoid called \emph{root groupoid}.
			For a fixed finite set $X$, their objects are quadruples $(\lie{h},a,b,p)$ where $\lie{h}$ denotes a Cartan subalgebra, $a\colon X\to\lie{h}$ a map with image a set of linearly independent coroots, $b\colon X\to\lie{h}^*$ a map with image a set of linearly independent roots and $p$ the corresponding parities.
			The root groupoid is generated by the following three types of morphisms:
			\begin{itemize}
				\item $(\lie{h},a,b,p)\to(\lie{h}',\theta\circ a,\theta^{-1}\circ b,p)$ for any isomorphism $\lie{h}\overset{\cong}{\rightarrow}\lie{h}'$,
				\item $(\lie{h},a,b,p)\to(\lie{h},a',b,p)$, where $a'(x)=\lambda(x)a(x)$ for some $\lambda\colon X\to\compl^*$,
				\item even and odd reflections.
			\end{itemize}
			In \cite[\S 4.2.5]{GHS} the \emph{skeleton} of the root groupoid is defined as the subgroupoid generated by the even and odd reflections.
			Furthermore, the \emph{spine} of the root groupoid is defined as the subgroupoid generated by odd reflections only, see \cite[\S 4.2.8]{GHS}.

			Given a regular symmetrizable Cartan datum $(B,\tau)$ of rank $n$, we can choose a minimal realization of $\lie{h}$, i.e.\  a vector space $\lie{h}$ together with linearly independent coroots $a_1, \dots a_n$ and linearly independent roots $b_1, \dots, b_n$ such that the natural pairing satisfies $\langle a_i, b_j\rangle = B_{ij}$.
			Thus we obtain a quadruple $v=(\lie{h}, a,b,\tau)$.
			The connected component of this quadruple in the root groupoid is an admissible, fully reflectable component in the sense of \cite[Def.\ 3.2.3, \S 3.4.1]{GHS}.
			\begin{comp}
				The connected component of $v$ in the skeleton of the root groupoid is the simply connected cover of $\weylgrpd$ in the sense of \cite[Def.\ 9.1.10 and 10.1.1]{HeckenbergerSchneider}.
			\end{comp}
			\begin{comp}
				The subgroupoid $\weylgrpd'$ of $\weylgrpd$ generated by all isotropic reflections is isomorphic to the connected component of $v$ in the spine of the root groupoid.
			\end{comp}
			Yet another notion of Weyl groupoids was suggested by Sergeev and Veselov in \cite{SergeevVeselovGrothendieckRing}.
			However as they remark this construction is completely unrelated to the notion of Weyl groupoid we work with.
	\section{Borel subalgebras of \texorpdfstring{$\lie{sl}(m|n)$}{sl(m|n)}, \texorpdfstring{$\lie{osp}(2m+1|2n)$}{osp(2m|2n)} and \texorpdfstring{$\lie{osp}(2m|2n)$}{osp(2m|2n)}}
		\label{borelsandcartan}
		To give a detailed description of the Weyl groupoids of $\lie{sl}(m|n)$, $\lie{osp}(2m+1|2n)$ and $\lie{osp}(2m|2n)$ we first need some preparation.
		Recall that the Lie superalgebra $\lie{sl}(m|n)$ is given in matrices by 
		\[
			\left(\begin{array}{c|c}
			A&B\\\hline C&D
			\end{array}\right)
		\]
		such that $\tr(A)-\tr(D)=0$ together with the usual supercommutator $[x,y]=x\circ y - (-1)^{\abs{x}\abs{y}}y\circ x$.
		
		The orthosymplectic Lie superalgebra $\lie{osp}(2m+1|2n)$ is explicitly given by all matrices of the form
		\begin{equation}\label{dfnospexplicit}	
			\left(\begin{array}{ccc|cc}
				0&-u^t&-v^t&x&x_1\\
				v&a&b&y&y_1\\
				u&c&-a^t&z&z_1\\
				\hline
				-x_1^t&-z_1^t&-y_1^t&d&e\\
				x^t&z^t&y^t&f&-d^t
			\end{array}\right)
		\end{equation}
		where $a$ is any $(m\times m)$-matrix; $b$ and $c$ are skew-symmetric $(m\times m)$-matrices; $d$ is any $(n\times n)$-matrix; $e$ and $f$ are symmetric $(n\times n)$-matrices; $u$ and $v$ are $(m\times 1)$-matrices; $y$, $y_1$, $z$ and $z_1$ are $(m\times n)$-matrices; and $x$ as well as $x_1$ are $(1\times n)$-matrices.
		The Lie superalgebra $\lie{osp}(2m|2n)$ is given by the same matrices, except that we have to delete the first row and column.
		We label the rows and columns by $0,1,\dots,m,-1,\dots,-m,(m+1),\dots,(m+n),-(m+1),\dots,-(m+n)$ in this order (leaving out $0$ for $\lie{osp}(2m|2n)$).
		
		To determine the Weyl groupoids of $\lie{sl}(m|n)$, $\lie{osp}(2m+1|2n)$ and $\lie{osp}(2m|2n)$ we require an explicit description of all the possible realizations as contragredient Lie superalgebras.
		Hence we need to determine all their ordered root bases, which mostly amounts to determining their Borel subalgebras, and the corresponding Cartan data.
		These results are standard and are essentially already contained in \cite[\S 2.5.5]{Kac}, though the formulation there is less convenient and not explicit enough for our purposes.
		
		For $\lie{sl}(n|n)$ there is a minor extra difficulty as the simple roots are not linearly independent, and the construction of the contragredient Lie superalgebra from the Cartan data computed from $\lie{sl}(n|n)$ yields $\lie{gl}(n|n)$ rather than $\lie{sl}(n|n)$.
		Nevertheless we will use $\lie{sl}(n|n)$ in all computations below, as all statements about Borel subalgebras and simple roots carry over to $\lie{gl}(n|n)$.
		
		Let $\lie{g}$ be either $\lie{sl}(m|n)$, $\lie{osp}(2m+1|2n)$ or $\lie{osp}(2m|2n)$.
		Recall (see e.g.~\cite[\S 3.1]{Musson}) that for a fixed Borel subalgebra $\lie{b}_\even\subseteq\lie{g}_\even$ there are only finitely many Borel subalgebras $\lie{b}\subseteq\lie{g}$ with even part $\lie{b}_\even$.
		Moreover these Borel subalgebras can be relatively easily described in terms of partitions fitting in an $m\times n$-rectangle, see e.g.~\cite[Proposition 3.3.8]{Musson}:
		For $\lie{sl}(m|n)$ and $\lie{osp}(2m+1|2n)$ the Borel subalgebras with fixed even part are in bijection with the set of partitions $\lambda$ fitting in an $m\times n$-rectangle.
		For $\lie{osp}(2m|2n)$ each partition $\lambda$ fitting in an $m\times n$-rectangle determines two Borel subalgebras $\lie{b}(\lambda,+)$ and $\lie{b}(\lambda,-)$, which coincide if and only if $\lambda_1=n$.
		
		In \cref{constructionBorelsl,constructionBoreloddosp,constructionBorelevenosp} below we provide a detailed description of the Borel subalgebras of $\lie{sl}(m|n)$, $\lie{osp}(2m+1|2n)$ and $\lie{osp}(2m|2n)$, based on the description in \cite[\S 3.3--3.4]{Musson}.
		We compute the corresponding Cartan data in \cref{cartancomputation}.
		For this it is more convenient to work with permutations instead of partitions, and therefore we will frequently use \cref{shufflespartitions} to pass between these.
		\subsection{Borel subalgebras for \texorpdfstring{$\lie{sl}(m|n)$}{sl(m|n)}}\label{constructionBorelsl}
			For the Lie superalgebra $\lie{sl}(m|n)$ we fix the $m+n-1$-dimensional Cartan subalgebra $\lie{h}$ given by all the diagonal matrices, and we are interested in Borel subalgebras with even part $\lie{b}_\even$ given by the standard even Borel subalgebra of upper triangular matrices.
			As usual, we let $\eps_i\in\lie{h}^*$ ($1\leq i\leq m+n$) denote the projection to the $i$-th diagonal entry.
			
			Given a partition $\lambda\in\partitions_{m\times n}$ we can construct the odd part of a Borel subalgebra $\lie{b}(\lambda)$ with even part $\lie{b}_\even$ as follows:
			Draw $\lambda$ in an $m\times n$-rectangle as in \cref{partitioninrectangle}, which we identify with the top right $m\times n$-block.
			The entries corresponding to the boxes of $\lambda$ are required to be $0$, while the other entries in the top right block can be arbitrary.
			Similarly drawing the transpose of the complement of $\lambda$ (taken in the $m\times n$-rectangle) into the lower right $n\times m$-block determines the zeros and arbitrary entries there.
			For instance the ``standard'' Borel subalgebra of upper triangular matrices corresponds to $\lambda=\emptyset$, and for another concrete example see \cref{partitionborelexamplesl} below.
			By \cite[Prop.~3.3.8]{Musson} the $\lie{b}(\lambda)$'s are all the Borel subalgebras of $\lie{sl}(m|n)$ with even part $\lie{b}_\even$.
			\begin{ex}\label{partitionborelexamplesl}
				Let  $m=3$, $n=4$ and $\lambda=(4,2,1)$.
				Then the corresponding Borel subalgebra of $\lie{sl}(3|4)$ is given by
				\begin{equation}
					\lie{b}(\lambda)=\left(
						\begin{tikzpicture}[baseline=(current bounding box.center),scale=0.5,text height=1ex, text width=1ex, text depth=0ex]
							\matrix (slmatr) [matrix of math nodes,column sep=0.1cm, row sep=0.1cm]
							{
								*&*&* &0&*&*&*\\
								0&*&* &0&0&*&*\\
								0&0&* &0&0&0&0\\
								*&*&* &*&*&*&*\\
								0&*&* &0&*&*&*\\
								0&0&* &0&0&*&*\\
								0&0&* &0&0&0&*\\
							};
							\draw ($(slmatr-1-3.north east)!0.5!(slmatr-1-4.north west)$) -- ($(slmatr-7-3.south east)!0.5!(slmatr-7-4.south west)$);
							\draw ($(slmatr-3-1.south west)!0.5!(slmatr-4-1.north west)$) -- ($(slmatr-3-7.south east)!0.5!(slmatr-4-7.north east)$);
							
							\draw (slmatr-1-4.north west) -- ($(slmatr-1-4.north east)!0.5!(slmatr-1-5.north west)$) -- ($(slmatr-1-4.south east)!0.5!(slmatr-2-5.north west)$) -- ($(slmatr-2-5.north east)!0.5!(slmatr-1-6.south west)$) -- ($(slmatr-2-6.south west)!0.5!(slmatr-3-5.north east)$) -- ($(slmatr-2-7.south east)!0.5!(slmatr-3-7.north east)$) -- (slmatr-3-7.south east);
							\draw (slmatr-4-1.north west) -- ($(slmatr-4-1.south west)!0.5!(slmatr-5-1.north west)$) -- ($(slmatr-4-2.south west)!0.5!(slmatr-5-1.north east)$) -- ($(slmatr-5-2.south west)!0.5!(slmatr-6-1.north east)$) -- ($(slmatr-5-3.south west)!0.5!(slmatr-6-2.north east)$) -- ($(slmatr-7-2.south east)!0.5!(slmatr-7-3.south west)$) -- (slmatr-7-3.south east);
						\end{tikzpicture}
					\right)
				\end{equation}
			\end{ex}
			Using \cref{shufflespartitions} to pass between shuffles and partitions, we can explicitly describe the ordered root bases and Cartan data for the Borel subalgebras of $\lie{sl}(m|n)$, see \cref{slmnchevalley}
		\subsection{Borel subalgebras for \texorpdfstring{$\lie{osp}(2m+1|2n)$}{osp(2m+1|2n)}}\label{constructionBoreloddosp}
			To describe all Borel subalgebras of $\lie{osp}(2m+1|2n)$ we fix the Cartan subalgebra $\lie{h}$ consisting of the diagonal matrices and let $\eps_i\in\lie{h}^*$ ($i\in \{\pm 1,\dots,\pm (m+n)\}$) denote the projections to the diagonal entries.
			We also fix the standard Borel subalgebra $\lie{b}_\even$ of the even part, which is defined by the simple roots $\eps_i-\eps_{i+1}$ for $1\leq i\leq m-1$ as well as $\eps_m$ and $\eps_{m+j}-\eps_{m+j+1}$ for $1\leq j\leq n-1$ together with $2\eps_{m+n}$.
			
			For a partition $\lambda\in\partitions_{m\times n}$ we can construct the odd part of a Borel subalgebra $\lie{b}(\lambda)$ with even part $\lie{b}_\even$ as follows:
			In the notation from \cref{dfnospexplicit} we demand that $z=0$ and $x=0$, while $x_1$ and $y_1$ can be chosen arbitrarily.
			Note that $y$ and $z_1$ are $m\times n$-matrices, and we identify these with the $m\times n$-rectangle from \cref{shufflespartitions}.
			For $y$, the entries in the boxes corresponding to $\lambda$ must be zero while the other entries are arbitrary, and for $z_1$ the rule is exactly the opposite.
			Again by \cite[Prop.~3.3.8]{Musson} the $\lie{b}(\lambda)$'s are all the Borel subalgebras of $\lie{osp}(2m+1|2n)$ with even part $\lie{b}_\even$.
			\begin{ex}
				Let $m=1$, $n=2$ and consider the partition $\lambda=(1)$.
				The corresponding Borel subalgebra of $\lie{osp}(3|4)$ is given by
				\begin{equation*}
					\lie{b}(\lambda)=\left(
					\begin{tikzpicture}[baseline=(current bounding box.center),scale=0.5,text height=1ex, text width=1ex, text depth=0ex]
						\matrix (ospmatr) [matrix of math nodes,column sep=0.1cm, row sep=0.1cm]
						{
							0&0&* &0&0&*&*\\
							*&*&0 &0&*&*&*\\
							0&0&* &0&0&*&0\\
							*&*&* &*&*&*&*\\
							*&0&* &0&*&*&*\\
							0&0&0 &0&0&*&0\\
							0&0&* &0&0&*&*\\
						};
						\draw ($(ospmatr-1-3.north east)!0.5!(ospmatr-1-4.north west)$) -- ($(ospmatr-7-3.south east)!0.5!(ospmatr-7-4.south west)$);
						\draw ($(ospmatr-3-1.south west)!0.5!(ospmatr-4-1.north west)$) -- ($(ospmatr-3-7.south east)!0.5!(ospmatr-4-7.north east)$);
						
						\draw[dashed] (ospmatr-2-4.north west) rectangle (ospmatr-2-5.south east);
						\draw[dashed] (ospmatr-3-6.north west) rectangle (ospmatr-3-7.south east);
						\draw[dashed] (ospmatr-4-2.north west) rectangle (ospmatr-5-2.south east);
						\draw[dashed] (ospmatr-6-3.north west) rectangle (ospmatr-7-3.south east);
						
						\draw (ospmatr-2-4.north west) -- ($(ospmatr-2-4.north east)!0.5!(ospmatr-2-5.north west)$) -- ($(ospmatr-2-4.south east)!0.5!(ospmatr-2-5.south west)$) -- (ospmatr-2-5.south east);
						\draw (ospmatr-3-6.north west) -- ($(ospmatr-3-6.north east)!0.5!(ospmatr-3-7.north west)$) -- ($(ospmatr-3-6.south east)!0.5!(ospmatr-3-7.south west)$) -- (ospmatr-3-7.south east);
						\draw (ospmatr-4-2.north west) -- ($(ospmatr-4-2.south west)!0.5!(ospmatr-5-2.north west)$) -- ($(ospmatr-4-2.south east)!0.5!(ospmatr-5-2.north east)$) -- (ospmatr-5-2.south east);
						\draw (ospmatr-6-3.north west) -- ($(ospmatr-6-3.south west)!0.5!(ospmatr-7-3.north west)$) -- ($(ospmatr-6-3.south east)!0.5!(ospmatr-7-3.north east)$) -- (ospmatr-7-3.south east);
					\end{tikzpicture}
					\right)
				\end{equation*}
			\end{ex}
			The corresponding ordered root bases and Cartan data are described in terms of shuffles in \cref{oddospchevalley}.
		\subsection{Borel subalgebras for \texorpdfstring{$\lie{osp}(2m|2n)$}{osp(2m|2n)}}\label{constructionBorelevenosp}
			The case of $\lie{osp}(2m|2n)$ is slightly more involved.
			Again we fix the Cartan subalgebra $\lie{h}$ consisting of diagonal matrices and let $\eps_i\in\lie{h}^*$ denote the projection to the $i$-th diagonal entry (with the same ordered basis and index conventions as for $\lie{osp}(2m+1|2n)$).
			Furthermore, we fix the standard Borel $\lie{b}_\even$ of the even part, which is given by the simple roots $\eps_i-\eps_{i+1}$ for $1\leq i\leq m-1$ as well as $\eps_{m-1}+\eps_m$ and $\eps_{m+j}-\eps_{m+j+1}$ for $1\leq j\leq n-1$ together with $2\eps_{m+n}$.
			
			Suppose we are given a partition $\lambda\in\partitions_{m\times n}$ and $\eps\in\{+,-\}$.
			From this we construct the odd part of a Borel subalgebra $\lie{b}(\lambda,\eps)$ with even part $\lie{b}_\even$ as follows.
			If $\eps=+$, the entries are determined by $\lambda$ by the same rules as in \cref{constructionBoreloddosp}.
			If $\eps=-$, we do the same construction as for $+$ but afterwards we swap the $m$-th and the $(-m)$-th row of the top right block.
			The $\lie{b}(\lambda,\eps)$'s are all the Borel subalgebras of $\lie{osp}(2m|2n)$ with even part $\lie{b}_\even$ by \cite[Prop.~3.3.8]{Musson}.
			
			Observe that $\lie{b}(\lambda,+)=\lie{b}(\lambda,-)$ if and only if $\lambda_1=n$, since we need the $m$-th row of $y$ to be zero and the $m$-th row of $z_1$ to be arbitrary.
			In this case we also denote the resulting Borel subalgebra by $\lie{b}(\lambda,\pm)$.
			\begin{ex}
				Let $m=n=2$.
				From the partitions $\lambda=(1,1)$ and $\mu=(2,1)$ we obtain the following Borel subalgebras $\lie{b}(\lambda,+)$, $\lie{b}(\lambda,-)$ and $\lie{b}(\mu,\pm)$ of $\lie{osp}(4|4)$.
				Here $\lie{b}(\lambda, -)$ is obtained from $\lie{b}(\lambda, +)$ by swapping the rows and columns as indicated.
				\begin{figure}[H]
				\begin{subfigure}[t]{0.3\linewidth}
						$\left(
						\begin{tikzpicture}[ampersand replacement=\&,baseline=(current bounding box.center),text height=1ex, text width=1ex, text depth=0ex, scale=0.75, every node/.style={scale=0.75}]
							\matrix (ospmatr) [matrix of math nodes,column sep=0.1cm,row sep=0.1cm]
							{
								*\&*\&0\&* \&0\&*\&*\&*\\
								0\&*\&*\&0 \&0\&*\&*\&*\\
								0\&0\&*\&0 \&0\&0\&*\&0\\
								0\&0\&*\&* \&0\&0\&*\&0\\
								*\&*\&*\&* \&*\&*\&*\&*\\
								0\&0\&*\&* \&0\&*\&*\&*\\
								0\&0\&0\&0 \&0\&0\&*\&0\\
								0\&0\&*\&* \&0\&0\&*\&*\\
							};
							\draw ($(ospmatr-1-4.north east)!0.5!(ospmatr-1-5.north west)$) -- ($(ospmatr-8-4.south east)!0.5!(ospmatr-8-5.south west)$);
							\draw ($(ospmatr-4-1.south west)!0.5!(ospmatr-5-1.north west)$) -- ($(ospmatr-4-8.south east)!0.5!(ospmatr-5-8.north east)$);
							
							\draw[dashed] (ospmatr-1-5.north west) rectangle (ospmatr-2-6.south east);
							\draw[dashed] (ospmatr-3-7.north west) rectangle (ospmatr-4-8.south east);
							\draw[dashed] (ospmatr-5-1.north west) rectangle (ospmatr-6-2.south east);
							\draw[dashed] (ospmatr-7-3.north west) rectangle (ospmatr-8-4.south east);
							
							\draw (ospmatr-1-5.north west) -- ($(ospmatr-1-5.north east)!0.5!(ospmatr-1-6.north west)$) -- ($(ospmatr-2-5.south east)!0.5!(ospmatr-2-6.south west)$) -- (ospmatr-2-6.south east);
							\draw (ospmatr-3-7.north west) -- ($(ospmatr-3-7.north east)!0.5!(ospmatr-3-8.north west)$) -- ($(ospmatr-4-7.south east)!0.5!(ospmatr-4-8.south west)$) -- (ospmatr-4-8.south east);
							\draw (ospmatr-5-1.north west) -- ($(ospmatr-5-1.south west)!0.5!(ospmatr-6-1.north west)$) -- ($(ospmatr-5-2.south east)!0.5!(ospmatr-6-2.north east)$) -- (ospmatr-6-2.south east);
							\draw (ospmatr-7-3.north west) -- ($(ospmatr-7-3.south west)!0.5!(ospmatr-8-3.north west)$) -- ($(ospmatr-7-4.south east)!0.5!(ospmatr-8-4.north east)$) -- (ospmatr-8-4.south east);
							
							\pgfresetboundingbox 
							\useasboundingbox (ospmatr-1-1.north west) rectangle (ospmatr-8-8.south east);
						\end{tikzpicture}
						\right)$
					\caption*{$\lie{b}(\lambda, +)$}
				\end{subfigure}\hfill
				\begin{subfigure}[t]{0.3\linewidth}
						$\left(
							\begin{tikzpicture}[ampersand replacement=\&,baseline=(current bounding box.center),text height=1ex, text width=1ex, text depth=0ex, scale=0.75, every node/.style={scale=0.75}]
								\matrix (ospmatr) [matrix of math nodes,column sep=0.1cm,row sep=0.1cm]
								{
									*\&*\&0\&* \&0\&*\&*\&*\\
									0\&*\&*\&0 \&0\&0\&*\&0\\
									0\&0\&*\&0 \&0\&0\&*\&0\\
									0\&0\&*\&* \&0\&*\&*\&*\\
									*\&*\&*\&* \&*\&*\&*\&*\\
									0\&*\&*\&0 \&0\&*\&*\&*\\
									0\&0\&0\&0 \&0\&0\&*\&0\\
									0\&*\&*\&0 \&0\&0\&*\&*\\
									};
									\pgfresetboundingbox 
									\useasboundingbox (ospmatr-1-1.north west) rectangle (ospmatr-8-8.south east);
									\draw[thick, densely dotted] (ospmatr-2-1.north west) rectangle (ospmatr-2-8.south east);
									\draw[thick, densely dotted] (ospmatr-4-1.north west) rectangle (ospmatr-4-8.south east);
									\draw[thick, densely dotted] (ospmatr-1-2.north west) rectangle (ospmatr-8-2.south east);
									\draw[thick, densely dotted] (ospmatr-1-4.north west) rectangle (ospmatr-8-4.south east);
									\draw[<->, out = 90, in = 90, shorten >=1pt, shorten <=1pt] (ospmatr-1-2.north) to (ospmatr-1-4.north);
									\draw[<->, out = 0, in = 0, shorten >=1pt, shorten <=1pt] ($(ospmatr-2-8.east)+(8pt, 0)$) to ($(ospmatr-4-8.east)+(8pt,0)$);
									\draw ($(ospmatr-1-4.north east)!0.5!(ospmatr-1-5.north west)$) -- ($(ospmatr-8-4.south east)!0.5!(ospmatr-8-5.south west)$);
									\draw ($(ospmatr-4-1.south west)!0.5!(ospmatr-5-1.north west)$) -- ($(ospmatr-4-8.south east)!0.5!(ospmatr-5-8.north east)$);
								\end{tikzpicture}
								\right)$
								\caption*{$\lie{b}(\lambda, -)$}
					\end{subfigure}
					\hfill
					\begin{subfigure}[t]{0.3\linewidth}
					$\left(
						\begin{tikzpicture}[ampersand replacement=\&,baseline=(current bounding box.center),text height=1ex, text width=1ex, text depth=0ex, scale=0.75, every node/.style={scale=0.75}]
							\matrix (ospmatr) [matrix of math nodes,column sep=0.1cm,row sep=0.1cm]
							{
								*\&*\&0\&* \&0\&*\&*\&*\\
								0\&*\&*\&0 \&0\&0\&*\&*\\
								0\&0\&*\&0 \&0\&0\&*\&0\\
								0\&0\&*\&* \&0\&0\&*\&*\\
								*\&*\&*\&* \&*\&*\&*\&*\\
								0\&*\&*\&* \&0\&*\&*\&*\\
								0\&0\&0\&0 \&0\&0\&*\&0\\
								0\&0\&*\&0 \&0\&0\&*\&*\\
								};
								\pgfresetboundingbox 
								\useasboundingbox (ospmatr-1-1.north west) rectangle (ospmatr-8-8.south east);
								\draw ($(ospmatr-1-4.north east)!0.5!(ospmatr-1-5.north west)$) -- ($(ospmatr-8-4.south east)!0.5!(ospmatr-8-5.south west)$);
								\draw ($(ospmatr-4-1.south west)!0.5!(ospmatr-5-1.north west)$) -- ($(ospmatr-4-8.south east)!0.5!(ospmatr-5-8.north east)$);
								
								\draw[dashed] (ospmatr-1-5.north west) rectangle (ospmatr-2-6.south east);
								\draw[dashed] (ospmatr-3-7.north west) rectangle (ospmatr-4-8.south east);
								\draw[dashed] (ospmatr-5-1.north west) rectangle (ospmatr-6-2.south east);
								\draw[dashed] (ospmatr-7-3.north west) rectangle (ospmatr-8-4.south east);
								
								\draw (ospmatr-1-5.north west) -- ($(ospmatr-1-5.north east)!0.5!(ospmatr-1-6.north west)$) |- ($(ospmatr-1-6.south east)!0.5!(ospmatr-2-6.north east)$) -- (ospmatr-2-6.south east);
								\draw (ospmatr-3-7.north west) -- ($(ospmatr-3-7.north east)!0.5!(ospmatr-3-8.north west)$) |- ($(ospmatr-3-8.south east)!0.5!(ospmatr-4-8.north east)$) -- (ospmatr-4-8.south east);
								\draw (ospmatr-5-1.north west) -- ($(ospmatr-5-1.south west)!0.5!(ospmatr-6-1.north west)$) -| ($(ospmatr-6-1.south east)!0.5!(ospmatr-6-2.south west)$) -- (ospmatr-6-2.south east);
								\draw (ospmatr-7-3.north west) -- ($(ospmatr-7-3.south west)!0.5!(ospmatr-8-3.north west)$) -| ($(ospmatr-8-3.south east)!0.5!(ospmatr-8-4.south west)$) -- (ospmatr-8-4.south east);
							\end{tikzpicture}
							\right)$
							\caption*{$\lie{b}(\mu, \pm)$}
					\end{subfigure}
				\end{figure}
			\end{ex}
			We describe the corresponding ordered root bases and Cartan data in \cref{evenospchevalley}, again in terms of shuffles rather than partitions.
			To connect this to the above description of the Borel subalgebras, observe that if a shuffle $\sigma$ corresponds to a partition $\lambda$ under the bijection from \cref{shufflespartitions}, then $\sigma(m+n)=m$ if and only if $\lambda_1=n$.
			\begin{rem}
				As explained in \cite[\S 3.3]{Musson} the extra difficulties for $\lie{osp}(2m|2n)$ are due to the existence of an outer automorphism of $\lie{o}(2m)$ that on $\lie{h}^*$ swaps $\eps_m$ and $\eps_{-m}=-\eps_m$.
				This corresponds precisely to the swapping of rows in the construction of the Borel subalgebra $\lie{b}(\lambda,-)$ from $\lie{b}(\lambda,+)$.
			\end{rem}
		\subsection{Odd reflections in terms of partitions}
			\label{oddreflectionpartition}
			For $\lie{sl}(m|n)$, $\lie{osp}(2m+1|2n)$ and $\lie{osp}(2m|2n)$ we can describe odd reflections in terms of partitions as follows.
			Consider the $m\times n$-rectangle and number the boxes ascendingly in each row and column, starting from a $1$ in the top left as in the following example.
			\begin{equation*}
				\begin{tikzpicture}
					\fill[lightgray] (0,0) -- (0.5,0) -- (0.5,-0.5) -- (1,-0.5) -- (1,-1) -- (2,-1) -- (2,-1.5) -- (0,-1.5) -- (0,0);
						
					\draw[step=0.5,dashed] (0,0) grid (2,-1.5);
					\draw (0,0) rectangle (2,-1.5);
					
					\draw[thick] (0,0) -- (0.5,0) -- (0.5,-0.5) -- (1,-0.5) -- (1,-1) -- (2,-1) -- (2,-1.5);
					
					\foreach\x in {0,...,3} {
						\foreach\y in {0,...,2} {
							\pgfmathsetmacro\boxnumber{\x+\y+1}
							\node at ($(\x*0.5+0.25,-0.25-\y*0.5)$) {$\pgfmathprintnumber{\boxnumber}$};
						}
					}
				\end{tikzpicture}
			\end{equation*}
			Let $\lambda\in\partitions_{m\times n}$ and let $\lie{b}(\lambda)$ be the Borel subalgebra of $\lie{sl}(m|n)$ or $\lie{osp}(2m+1|2n)$ constructed from $\lambda$.
			Observe that the numbers in the boxes that can be removed from or added to $\lambda$ so that the result is still a partition $\lambda'\in\partitions_{m\times n}$ are precisely the indices of the odd isotropic simple roots for $\lie{b}(\lambda)$.
			In particular such a box (if it exists) is unique.
			The Borel subalgebra obtained from $\lie{b}(\lambda)$ by an odd reflection at the simple root $\alpha_i$ is $\lie{b}(\lambda')$, with $\lambda'\in\partitions_{m\times n}$ obtained from $\lambda$ by adding or removing a box numbered with $i$.
			
			Unsurprisingly the description of odd reflections for $\lie{osp}(2m|2m)$ is slightly more complicated due to the different series of Borels.
			In this case the odd reflection at $\alpha_i$ takes $\lie{b}(\lambda,\eps)$ to $\lie{b}(\lambda',\eps')$ according to the following rules (using the implicit convention $\eps'=\pm$ if $\lambda'_1=n$):
			\begin{itemize}
				\item If $\lambda_1<n$ and $\eps=+$, then $\eps'=+$ and $\lambda'$ is obtained from $\lambda$ by adding or removing a box numbered $i$ (note that in this case $\alpha_{m+n}=2\eps_{m+n}$ is even).
				\item If $\lambda_1<n$ and $\eps=-$, then $\eps'=-$ and $\lambda'$ is obtained from $\lambda$ by adding or removing a box numbered $i$ for $i<m+n-1$, and by adding the box numbered $m+n-1$ for $i=m+n$ (note that $\alpha_{m+n-1}=2\eps_{m+n}$ is even).
				\item If $\lambda_1=n$, then $\eps=\pm$.
					\begin{itemize}
						\item If $i<m+n-1$, then $\eps'=\pm$ and $\lambda'$ is obtained from $\lambda$ by adding or removing a box numbered $i$.
						\item If $i=m+n-1$, then $\eps'=+$ and $\lambda'$ is obtained from $\lambda$ by removing the box numbered $m+n-1$.
						\item If $i=m+n$, then $\eps'=-$ and $\lambda'$ is obtained from $\lambda$ by removing the box numbered $m+n-1$.
					\end{itemize}
			\end{itemize}
	\section{The Weyl groupoids of \texorpdfstring{$\lie{sl}(m|n)$}{sl(m|n)}, \texorpdfstring{$\lie{osp}(2m+1|2n)$}{osp(2m+1|2n)} and \texorpdfstring{$\lie{osp}(2m|2n)$}{osp(2m|2n)}}
		In this section we give a detailed description of the Cartan graphs and the Weyl groupoids of $\lie{sl}(m|n)$, $\lie{osp}(2m+1|2n)$ and $\lie{osp}(2m|2n)$.
		We begin by describing the underlying graph of the Cartan graph and then list the Serre matrices.
		Finally we determine the Coxeter-type relations among the generators of the Weyl groupoids.
		
		These Weyl groupoids also appear in \cite{AndruskiewitschAngionoSurvey}, where they are studied from the perspective of Nichols algebras.
		However, our combinatorics is based on the the graphical description of Borel subalgebras in terms of partitions.
		This directly reflects the structural theory of the Lie superalgebra, and therefore makes it very easy to pass between Weyl groupoids and Lie superalgebras.
		A further advantage of our description is that it is very easy to write down the Weyl groupoids in concrete examples.
		\subsection{Shape of the Cartan graph}
			In \cref{constructionBorelsl,constructionBoreloddosp,constructionBorelevenosp} we gave a detailed description of the (finitely many) Borel subalgebras of $\lie{sl}(m|n)$, $\lie{osp}(2m+1|2n)$ and $\lie{osp}(2m|2n)$ with fixed even part.
			As mentioned above, these represent all conjugacy classes of Borel subalgebras.
			Therefore by \cref{borelevenpart} their Cartan graphs split into several identical subgraphs without edges between each other.
			Hence we only need to consider one of these subgraphs, namely the one corresponding to the Borel subalgebras described above.
			The number of these subgraphs is the order of the Weyl group, i.e.~$m!n!$ for $\lie{sl}(m|n)$, $2^{m+n}m!n!$ for $\lie{osp}(2m+1|2n)$ and $2^{m+n-1}m!n!$ for $\lie{osp}(2m|2n)$.
			
			Moreover, by \cref{rootsordering} each of the above subgraphs again splits into several identical (up to a permutation of the edge colors) components without edges between them, corresponding to the possible reorderings of the simple roots.
			Observe that the ordering of the simple roots from \cref{slmnchevalley,oddospchevalley,evenospchevalley} is consistent under odd reflections.
			Therefore we only describe the component corresponding to this ordering, and for convenience also call it the Cartan graph.
			Since the number of simple roots is $m+n-1$ for $\lie{sl}(m|n)$ and $m+n$ for $\lie{osp}(2m+1|2n)$ and $\lie{osp}(2m|2n)$ it follows that altogether their Cartan graphs consist of $(m+n-1)!m!n!$ (resp.~$2^{m+n}m!n!(m+n)!$, $2^{m+n-1}n!m!(m+n)!$) copies of this component.
			By \cite[Thm.~3.1.3]{Musson} any two Borel subalgebras with the same even part are connected by a sequence of odd reflections, and therefore these components are moreover connected.
			
			In \cref{constructionBorelsl,constructionBoreloddosp,constructionBorelevenosp} we used partitions to describe the Borel subalgebras, and the corresponding shuffles to describe the simple roots.
			Also recall from \cref{oddreflectionpartition} that in this description odd reflections correspond to adding or removing single boxes.
			From these observations we obtain:
			\begin{prop}
				\label{cartangraphshape}
				The Cartan graphs of $\lie{sl}(m|n)$ (resp.~$\lie{gl}(n|n)$), $\lie{osp}(2m+1|2n)$ and $\lie{osp}(2m|2n)$ have the following underlying graphs:
				\begin{enumerate}
					\item For $\lie{sl}(m|n)$ (resp.~$\lie{gl}(n|n)$) the set of vertices is the set $\partitions_{m\times n}$ of partitions fitting in an $m\times n$-rectangle.
						The edges are colored by $\{1,\dots,m+n-1\}$.
						There are edges $\lambda\overset{i}{\longleftrightarrow}\lambda'$ if $\lambda'$ is obtained from $\lambda$ by adding a box numbered $i$ (using the numbering from \cref{oddreflectionpartition}), and loops $\lambda\overset{i}{\longleftrightarrow}\lambda$ if no box numbered $i$ can be added to $\lambda$.
					\item For $\lie{osp}(2m+1|2n)$  the set of vertices is $\partitions_{m\times n}$.
						The edges are colored by $\{1,\dots,m+n\}$.
						There are edges $\lambda\overset{i}{\longleftrightarrow}\lambda'$ if $\lambda'$ is obtained from $\lambda$ by adding a box numbered $i$, and loops $\lambda\overset{i}{\longleftrightarrow}\lambda$ if no box numbered $i$ can be added to $\lambda$.
						In particular there are loops of color $m+n$ at every vertex.
					\item For $\lie{osp}(2m|2n)$ the set of vertices is
						\begin{equation*}
							\{(\lambda,\eps)\mid\lambda\in\partitions_{m\times n}, \lambda_1<n, \eps\in\{+,-\}\}\cup\{(\lambda,\pm)\mid\lambda\in\partitions_{m\times n}, \lambda_1=n\}.
						\end{equation*}
						The edges are colored by $\{1,\dots,m+n\}$, and the non-loop edges are as follows:
						\begin{itemize}
							\item $(\lambda,\eps)\overset{i}{\longleftrightarrow}(\lambda',\eps)$ for $\lambda_1<n$ and $\lambda'$ obtained from $\lambda$ by adding a box numbered $i$, with $1\leq i\leq m+n-2$.
							\item $(\lambda,+)\overset{m+n-1}{\longleftrightarrow}(\lambda',\pm)$ for $\lambda_1=n-1$ and $\lambda'$ obtained from $\lambda$ by adding the box numbered $m+n-1$.
							\item $(\lambda,-)\overset{m+n}{\longleftrightarrow}(\lambda',\pm)$ for $\lambda_1=n-1$ and $\lambda'$ obtained from $\lambda$ by adding the box numbered $m+n-1$.
							\item $(\lambda,\pm)\overset{i}{\longleftrightarrow}(\lambda',\pm)$ for $\lambda_1=n$ and $\lambda'$ obtained from $\lambda$ by adding a box numbered $i$, with $1\leq i\leq m+n-2$.
						\end{itemize}
				\end{enumerate}
			\end{prop}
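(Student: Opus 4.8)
The plan is to assemble the structural results already established. \cref{contragredientcartandef} tells us what the vertices and edges of $\cartangr_\lie{g}$ are in Lie-theoretic terms, and \cref{oddreflectionpartition} translates the relevant data -- odd reflections and the odd-isotropic condition -- into the combinatorics of adding and removing boxes; the proposition is essentially the synthesis of these two inputs. First I would pin down the vertex set. After restricting to the single connected component fixed just before the proposition (using \cref{borelevenpart} to discard the components coming from conjugate even Borels and \cref{rootsordering} to fix a consistent ordering of the simple roots), the vertices are exactly the Borel subalgebras with even part $\lie{b}_\even$ carrying this chosen ordering. By the classification in \cite[Prop.~3.3.8]{Musson} recalled in \cref{constructionBorelsl,constructionBoreloddosp,constructionBorelevenosp}, these are parametrized by $\partitions_{m\times n}$ in the cases $\lie{sl}(m|n)$ and $\lie{osp}(2m+1|2n)$, and by the pairs $(\lambda,\eps)$ (with $\lie{b}(\lambda,+)=\lie{b}(\lambda,-)$ exactly when $\lambda_1=n$) in the case $\lie{osp}(2m|2n)$; this is precisely the stated vertex set, and the number of colors equals the number of simple roots, i.e.\ $m+n-1$ for $\lie{sl}(m|n)$ and $m+n$ for the two orthosymplectic series.

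Next I would read off the edges. By \cref{contragredientcartandef} together with \cref{cg1}, for each vertex $x$ and each color $i$ there is a unique $i$-colored edge: it is the non-loop edge to the vertex obtained by odd reflection at $\alpha_i^x$ when $\alpha_i^x$ is odd isotropic, and a loop $x\overset{i}{\longleftrightarrow}x$ otherwise. For $\lie{sl}(m|n)$ and $\lie{osp}(2m+1|2n)$ I would then invoke \cref{oddreflectionpartition}: the odd-isotropic simple roots $\alpha_i$ are exactly those $i$ for which a box numbered $i$ can be added to or removed from $\lambda$, and the odd reflection at such $\alpha_i$ is precisely the corresponding box move. Recording each resulting undirected edge once -- say, from the partition to which the box is added -- gives the edges $\lambda\overset{i}{\longleftrightarrow}\lambda'$ with $\lambda'=\lambda+\square_i$, while a loop of color $i$ occurs at $\lambda$ precisely when $\alpha_i$ is not odd isotropic, i.e.\ when no box numbered $i$ can be added to or removed from $\lambda$. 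Finally, for $\lie{osp}(2m+1|2n)$ the root $\alpha_{m+n}$ is non-isotropic at every vertex, as one reads off from the Cartan data in \cref{oddospchevalley}; hence it never gives an odd reflection and produces a loop of color $m+n$ at every vertex, as claimed.

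For $\lie{osp}(2m|2n)$ the same scheme applies, but now the translation must be run through the refined, sign-dependent description of odd reflections recorded at the end of \cref{oddreflectionpartition}. I would treat its three cases separately -- $\lambda_1<n$ with $\eps=+$, $\lambda_1<n$ with $\eps=-$, and $\lambda_1=n$ (where $\eps=\pm$) -- and, within each, split according to the color $i$. Matching the resulting box moves and sign changes $\eps\mapsto\eps'$ to the four bulleted edge types then amounts to checking: that colors $1\le i\le m+n-2$ give ordinary box additions preserving the sign; that the two special colors $m+n-1$ and $m+n$ are the ones through which the two series interact, producing the edges into the merged vertices $(\lambda',\pm)$; and that every remaining color at which no box move occurs -- in particular the even roots $2\eps_{m+n}$, which appear as $\alpha_{m+n}$ when $\eps=+$ and as $\alpha_{m+n-1}$ when $\eps=-$ by \cref{evenospchevalley} -- yields a loop.

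The genuinely substantive input -- that odd reflections are single-box moves and that the odd-isotropic indices are exactly the addable-or-removable box numbers -- has already been isolated in \cref{oddreflectionpartition}, so no further Lie-theoretic computation is required here. The main obstacle is therefore organizational rather than conceptual: in the $\lie{osp}(2m|2n)$ case one must carefully track the sign $\eps$, the identification $\lie{b}(\lambda,+)=\lie{b}(\lambda,-)$ when $\lambda_1=n$, and the asymmetric roles of the two nearly-coinciding simple roots $\alpha_{m+n-1}$ and $\alpha_{m+n}$ near the boundary values of $\lambda_1$, and then verify that these produce exactly the two ``merging'' edge types and no spurious edges or loops. I expect this boundary bookkeeping, and not any conceptual difficulty, to be the part requiring the most care.
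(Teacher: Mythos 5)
Your proposal is correct and follows essentially the same route as the paper, which presents this proposition as a direct synthesis of the classification of Borel subalgebras in \cref{constructionBorelsl,constructionBoreloddosp,constructionBorelevenosp}, the definition of the Cartan graph in \cref{contragredientcartandef}, and the box-addition description of odd reflections in \cref{oddreflectionpartition} (the paper literally writes ``From these observations we obtain'' and gives no further proof). Your careful case split for $\lie{osp}(2m|2n)$ by sign and color is exactly the bookkeeping the paper leaves implicit.
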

			Hence the connected components of the Cartan graph of $\lie{osp}(2m+1|2n)$ are almost the same as those of $\lie{sl}(m|n)$, with the only difference being the additional loops of color $m+n$ at every vertex for $\lie{osp}(2m+1|2n)$.
			For some concrete small examples see \cref{examplesection}
		\subsection{The Serre matrices}
			\begin{prop}\label{cartanmatrices}
				The Serre matrices for $\lie{sl}(m|n)$ (resp.~$\lie{gl}(n|n)$), $\lie{osp}(2m+1|2n)$ and $\lie{osp}(2m|2n)$ have the following form:
				\begin{enumerate}
					\item For $\lie{sl}(m|n)$ (resp.~$\lie{gl}(n|n)$) the Serre matrix is $A_{m+n-1}$ everywhere.
					\item For $\lie{osp}(2m+1|2n)$ the Serre matrix is $B_{m+n}$ everywhere.
					\item Let $\lambda\in\partitions_{m\times n}$ and $\eps\in\{+,-\}$.
						Then the Serre matrix at the vertex $(\lambda,\eps)$ of the Cartan graph of $\lie{osp}(2m|2n)$ is:
						\begin{itemize}
							\item $C_{m+n}$ if $\lambda_1<n-1$ and $\eps=+$,
							\item $C'_{m+n}$ (obtained by swapping the last two rows and columns of $C_{m+n}$) if $\lambda_1<n-1$ and $\eps=-$,
							\item $A_{m+n}$ if $\lambda_1=n-1$ and $\eps=+$,
							\item $A'_{m+n}$ (obtained by swapping the last two rows and columns of $A_{m+n}$) if $\lambda_1=n-1$ and $\eps=-$,
							\item $D_{m+n}$ if $\lambda_2=n$ and $\eps=\pm$,
							\item the generalized Cartan matrix
								\begin{equation*}
									\begin{smallpmatrix}
										2&-1&0&\cdots&0&0\\
										-1&2&\sddots&\sddots&\svdots&\svdots\\
										0&\sddots&\sddots&-1&0&0\\
										\svdots&\sddots&-1&2&-1&-1\\
										0&\cdots&0&-1&2&-1\\
										0&\cdots&0&-1&-1&2
									\end{smallpmatrix}
								\end{equation*}
								if $\lambda_2<\lambda_1=n$ and $\eps=\pm$.
						\end{itemize}
				\end{enumerate}
			\end{prop}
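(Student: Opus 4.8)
The proof is by direct computation: for each Borel subalgebra we read off the Cartan datum $(B,\tau)$ from the explicit description in \cref{slmnchevalley,oddospchevalley,evenospchevalley} and apply \cref{associatedCMdef} to obtain the Serre matrix $A^{B,\tau}$. The key structural feature in all three families is that the simple roots form a \emph{chain}: consecutive simple roots $\beta_i,\beta_{i+1}$ are non-orthogonal, while simple roots that are far apart are orthogonal (with the usual fork at the end for type $D$). Hence $b_{ij}=0$ whenever $\abs{i-j}\geq 2$, so each Serre matrix is tridiagonal up to the corner entries governed by the final one or two nodes, and the bulk of the work is to identify these corner entries.

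For $\lie{sl}(m|n)$ the chain has no distinguished end node, and one checks that for every pair of adjacent simple roots the Serre matrix entry equals $-1$: if the diagonal entry $b_{ii}$ vanishes (an odd isotropic node) this is immediate from \cref{associatedCMdef}, and otherwise the type-$A$ root geometry forces $\tfrac{2b_{ij}}{b_{ii}}=-1$ regardless of the sign of $b_{ii}$. Since this is independent of which simple roots happen to be odd, the Serre matrix equals $A_{m+n-1}$ at \emph{every} vertex, as claimed. For $\lie{osp}(2m+1|2n)$ the same argument handles all interior nodes, so it remains to analyze the final node. Its simple root is always non-isotropic (a short root related to $\eps_{m+n}$), so $b_{m+n,m+n}\neq 0$; this explains the loop of color $m+n$ at every vertex, and the standard type-$B$ computation of the last pair of Cartan integers then yields $A^{B,\tau}=B_{m+n}$ everywhere.

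The case of $\lie{osp}(2m|2n)$ is where the genuine case distinctions appear, because the shape of the Dynkin diagram near its forked end depends on the first two rows of $\lambda$ and on the sign $\eps$. Here I would again treat the interior nodes uniformly (giving the type-$A$ part of the matrix) and then determine the last two or three simple roots from \cref{evenospchevalley} in each of the regimes $\lambda_1<n-1$, $\lambda_1=n-1$, $\lambda_2=n$ and $\lambda_2<\lambda_1=n$. The swap of the $m$-th and $(-m)$-th rows in the $\eps=-$ construction interchanges the roles of the last two simple roots, which accounts for passing from $C_{m+n}$ to $C'_{m+n}$ (resp.\ from $A_{m+n}$ to $A'_{m+n}$); when $\lambda_1=n$ the two Borels $\lie{b}(\lambda,+)$ and $\lie{b}(\lambda,-)$ coincide and the fork becomes fully symmetric, producing $D_{m+n}$ when $\lambda_2=n$ and the exceptional matrix listed above when $\lambda_2<\lambda_1=n$.

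The main obstacle is precisely this last family of computations for $\lie{osp}(2m|2n)$: one must carefully track how the diagonal entries $b_{ii}$ (i.e.\ which of the final nodes are odd isotropic rather than even) and the adjacent off-diagonal entries change as $\lambda_1$ and $\lambda_2$ cross the thresholds $n-1$ and $n$, and verify that the resulting corner entries reproduce the $-2$'s and fork pattern of the six claimed matrices. As a consistency check one can use that $\cartangr_\lie{g}$ is a Cartan graph (\cref{itsacartan}), so by \cref{cg2} the $i$-th rows of the Serre matrices at the two ends of any color-$i$ edge must agree; combined with the edge description in \cref{cartangraphshape} this pins down the corner entries unambiguously.
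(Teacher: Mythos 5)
Your proposal is correct and takes essentially the same route as the paper: the paper's proof consists of reading off the Cartan data from \cref{slmnchevalley,oddospchevalley,evenospchevalley} and applying the normalization rules of \cref{associatedCMdef}, which is exactly the computation you outline (including the case analysis at the forked end for $\lie{osp}(2m|2n)$ governed by $\lambda_1$, $\lambda_2$ and $\eps$). The consistency check via the semi-Cartan graph axiom is a nice extra but is not needed.
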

			\begin{proof}
				In all cases the Serre matrix is obtained from the Cartan data determined in \cref{slmnchevalley,oddospchevalley,evenospchevalley} according to the rules from \cref{associatedCMdef}.
			\end{proof}
			\begin{rem}
				Let $\lie{g}$ be $\lie{sl}(m|n)$ (resp.~$\lie{gl}(n|n)$), $\lie{osp}(2m+1|2n)$ or $\lie{osp}(2m|2n)$ and let $\lie{b}\subseteq\lie{g}$ be the Borel subalgebra corresponding to a vertex $x$ of the Cartan graph $\cartangr_\lie{g}$.
				Observe that the Serre matrix at $x$ is the generalized Cartan matrix corresponding to the Dynkin--Kac diagram for $\lie{b}$ considered as a Dynkin diagram, see \cite[\S 3.4.3]{Musson} for a list.
			\end{rem}
		\subsection{The Coxeter relations}
			By \cite[Thm.~9.4.8]{HeckenbergerSchneider} the Weyl groupoid of a Cartan graph $\cartangr=(I,X,r,A)$ is a \emph{Coxeter groupoid}, i.e.~the generators $s_i$ are only subject to relations of the form $\id_x(s_is_j)^{m(x)_{ij}}\id_x=\id_x$ for some symmetric matrices $(m(x)_{ij})$ with $m(x)_{ii}=1$ (with $i,j\in I$ and $x\in X$, and the implicit assumption that $(r_ir_j)^{m(x)_{ij}}(x)=x$ unless $m(x)_{ij}=\infty$).
			In fact $m(x)_{ij}=\abs{\cartanroots^\real_x\cap(\nats_0\alpha_i^x+\nats_0\alpha_j^x)}$.
			Therefore to obtain a presentation in terms of generators and relations we only have to determine the orders of $s_is_j$, starting from all vertices of the Cartan graph.
			\begin{prop}
				\label{coxeterrelations}
				For $\lie{sl}(m|n)$ (resp.~$\lie{gl}(n|n)$), $\lie{osp}(2m+1|2n)$ and $\lie{osp}(2m|2n)$, the $m(x)_{ij}$ are determined from the Serre matrices by the same rules as for semisimple Lie algebras.
				Explicitly,
				\begin{align*}
					A(x)_{ij}A(x)_{ji}=0&\implies m(x)_{ij}=2,\\
					A(x)_{ij}A(x)_{ji}=1&\implies m(x)_{ij}=3,\\
					A(x)_{ij}A(x)_{ji}=2&\implies m(x)_{ij}=4.
				\end{align*}
			\end{prop}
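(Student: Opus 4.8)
The plan is to compute each $m(x)_{ij}=\abs{\cartanroots^\real_x\cap(\nats_0\alpha_i^x+\nats_0\alpha_j^x)}$ directly and match it against the claimed rule. By the Remark following \cref{weylgroupoidautos} the real roots of $\weylgrpd$ at $x$ coincide with the real roots of $\lie{g}$, so after identifying $\ints^I_x$ with the root lattice in $\lie{h}^*$ as in \cref{identifylattices}, the integer $m(x)_{ij}$ is just the number of roots of $\lie{g}$ lying in the two-dimensional cone $\nats_0\alpha_i^x+\nats_0\alpha_j^x$. Since by \cref{cartanmatrices} every Serre matrix is of finite type $A$, $B$, $C$ or $D$, each rank-two principal submatrix $\begin{smallpmatrix}2&A(x)_{ij}\\A(x)_{ji}&2\end{smallpmatrix}$ is again a generalized Cartan matrix of finite type, so $A(x)_{ij}A(x)_{ji}\in\{0,1,2\}$ and the value $3$ (which would force $m(x)_{ij}=6$) never occurs. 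Thus I only need to produce $m\in\{2,3,4\}$.

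I would dispose of the case $A(x)_{ij}A(x)_{ji}=0$ first: here $A(x)_{ij}=A(x)_{ji}=0$, which by \cref{associatedCMdef} is equivalent to $b_{ij}=b_{ji}=0$, so the corresponding root vectors commute, $\alpha_i^x+\alpha_j^x$ is not a root, and the cone contains only $\alpha_i^x,\alpha_j^x$; hence $m(x)_{ij}=2$. In the case $A(x)_{ij}A(x)_{ji}=1$ the assertion is that the cone consists precisely of $\alpha_i^x,\alpha_j^x,\alpha_i^x+\alpha_j^x$, and I would read off from the explicit root systems recorded in \cref{slmnchevalley,oddospchevalley,evenospchevalley} that $\alpha_i^x+\alpha_j^x$ is always a root while $2\alpha_i^x+\alpha_j^x$ and $\alpha_i^x+2\alpha_j^x$ are not, giving $m(x)_{ij}=3$. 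For $A(x)_{ij}A(x)_{ji}=2$ one argues analogously that the cone contains exactly the four roots $\alpha_i^x,\alpha_j^x,\alpha_i^x+\alpha_j^x,2\alpha_i^x+\alpha_j^x$ (up to exchanging $i$ and $j$), so $m(x)_{ij}=4$; note that for $\lie{osp}(2m+1|2n)$ this configuration always involves the short root $\alpha_{m+n}^x$, which carries a loop at every vertex by \cref{cartangraphshape} and is therefore never odd isotropic.

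The main obstacle is that, in contrast to the semisimple case, the count cannot simply be reduced to a rank-two reflection subgroup: the proof of \cref{weylgroupoidautos} shows that an odd reflection acts as the identity on $\lie{h}^*$, so the subgroupoid on the colours $\{i,j\}$ need not be a dihedral group of reflections when one of $\alpha_i^x,\alpha_j^x$ is odd isotropic. In these mixed configurations, and in the odd-odd ones (which occur for instance for two consecutive isotropic simple roots of $\lie{sl}(m|n)$ whose sum is even), the element $\alpha_i^x+\alpha_j^x$ may itself be odd isotropic, and the delicate point is to confirm that it nonetheless counts as a real root. This holds because such a sum arises as a genuine simple root of the ordered root base reached by the relevant odd reflection, hence lies in $\cartanroots^\real_x$ by \cref{definitionrealroots}; together with the direct verification that no longer root enters the cone this yields the claimed value in every case. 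Finally, I would observe that the triangular end of the diagram occurring for $\lie{osp}(2m|2n)$ when $\lambda_2<\lambda_1=n$ is affine of type $\tilde A_2$ only as a rank-three configuration: each of its rank-two subcones still meets $\lie{g}$ in finitely many roots since $\lie{g}$ is finite-dimensional, so the enumeration remains valid and the pairwise values stay in $\{2,3\}$ as asserted.
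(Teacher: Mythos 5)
Your overall strategy --- computing $m(x)_{ij}=\abs{\cartanroots^\real_x\cap(\nats_0\alpha_i^x+\nats_0\alpha_j^x)}$ by counting the roots lying in each rank-two cone --- is the one the paper itself uses for $\lie{osp}(2m|2n)$, but your key reduction ``$m(x)_{ij}$ is just the number of roots of $\lie{g}$ in the cone'' is false for $\lie{osp}(2m+1|2n)$, and it fails precisely in the cases that matter. By \cref{definitionrealroots} the real roots at $x$ are the images of simple roots under morphisms of $\weylgrpd$, i.e.\ the orbit of the simple roots of all bases in the component under the group generated by reflections at non-isotropic simple roots. For $\lie{osp}(2m+1|2n)$ this orbit consists of $\pm\eps_a\pm\eps_b$ and $\pm\eps_a$ and does \emph{not} contain the roots $\pm 2\eps_{m+j}$ of $\lie{sp}(2n)\subseteq\lie{g}_\even$ (the same phenomenon as for $\lie{osp}(1|2)$, whose only real roots are $\pm$ the simple root even though twice the simple root is a root of $\lie{g}$). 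So the cited identification of the real roots of $\weylgrpd$ with the \emph{real} roots of $\lie{g}$ is not an identification with \emph{all} roots of $\lie{g}$. Concretely, whenever $\sigma(m+n)=m+n$ the simple root $\alpha_{m+n}=\eps_{m+n}$ is odd non-isotropic and $2\alpha_{m+n}=2\eps_{m+n}$ is a root of $\lie{g}$ lying in \emph{every} cone $\nats_0\alpha_i^x+\nats_0\alpha_{m+n}^x$; your count would then return $m(x)_{i,m+n}=3$ instead of $2$ when $A(x)_{i,m+n}=0$, and $m(x)_{m+n-1,m+n}=5$ instead of $4$, contradicting the statement. The same oversight is already present in your $A(x)_{ij}A(x)_{ji}=0$ case, where you tacitly assume that no multiple $k\alpha_j^x$ with $k\geq 2$ is a root. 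For $\lie{sl}(m|n)$ and $\lie{osp}(2m|2n)$ all roots happen to be real (for the latter, $2\eps_{m+n}$ actually occurs as a simple root), so there your count is unproblematic.

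To repair the argument you would first have to determine $\cartanroots^\real_x$ explicitly as the Weyl-group orbit of the simple roots, excluding $\pm 2\eps_{m+j}$ for $\lie{osp}(2m+1|2n)$, and only then count. The paper instead sidesteps this for $\lie{sl}(m|n)$ and $\lie{osp}(2m+1|2n)$: since by \cref{cartanmatrices} the Serre matrix is constant on the component, the generators $s_i$ act as vertex-independent linear maps on $\lie{h}^*$, so $m(x)_{ij}$ is the order of $s_is_j$ for the constant generalized Cartan matrix (giving the standard dihedral values $2$, $3$, $4$), and it only remains to check that the corresponding path in the Cartan graph closes up, which is clear from the partition picture; the root count is carried out only for $\lie{osp}(2m|2n)$. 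Your remaining points --- that $A(x)_{ij}A(x)_{ji}\leq 2$ always holds, that an isotropic sum $\alpha_i^x+\alpha_j^x$ is real because it occurs as a simple root after one odd reflection, and the discussion of the $\tilde{A}_2$-shaped block --- are correct.
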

			\begin{proof}
				By \cref{cartanmatrices} we know for $\lie{sl}(m|n)$ (resp.~$\lie{gl}(n|n)$) and $\lie{osp}(2m+1|2n)$ that we have the same Serre matrices at every vertex in our Cartan graph.
				As the Serre matrices are the same at all vertices, the linear maps $s_i\colon \lie{h}^*\to\lie{h}^*$ corresponding to the generators of the Weyl groupoid are independent of the vertex.
				From this it follows that $(s_is_j)^{m(x)_{ij}}=\id_{\lie{h}^*}$, and that $m(x)_{ij}$ is the lowest number fulfilling this.
				Thus we only need to check that the induced path in the Cartan graph ends at the same vertex that we started.
				But this follows easily from the explicit description in terms of partitions.

				For $\lie{osp}(2m|2n)$ the situation is a bit more tedious.
				We need to compute the intersection of the linear span of two simple roots with the roots of $\lie{osp}(2m|2n)$.
				Using the explicit description of the simple roots in \cref{evenospchevalley} this is rather straightforward.
				We will only do this for the interesting cases, i.e.\  when $i,j\in\{n+m-2,n+m-1,n+m\}$, as the remaining cases are similar (and easier).
				\begin{itemize}
					\item For $x=(\lambda,+)$ with $\lambda_1\leq m-2$, the last three simple roots are $\eps_{i}-\eps_{m+n-1}$, $\eps_{m+n-1}-\eps_{m+n}$, $2\eps_{m+n}$, where $i$ is either $m$ or $m+n-2$.
						Therefore we get the claimed $m(x)_{ij}$.
					\item For $x=(\lambda,+)$ with $\lambda_1=m-1$, the last three simple roots are $\eps_{i}-\eps_{m}$, $\eps_{m}-\eps_{m+n}$, $2\eps_{m+n}$ where $i$ is either $m-1$ or $m+n-1$.
						As $2\eps_m$ is not a root of $\lie{osp}(2m|2n)$ we get type $A$ relations.
					\item For $x=(\lambda,-)$ we can apply the same argument as above since in this case only the last two simple roots are swapped.
					\item For $x=(\lambda,\pm)$ with $\lambda_1=m>\lambda_2$, the corresponding last three simple roots are given by $\eps_{i}-\eps_{m+n},\eps_{m+n}-\eps_{m},\eps_{m+n}+\eps_m$ where $i$ is either $m-1$ or $m+n-1$.
						As $2\eps_{m+n}$ is a root we directly see that $m(x)_{ij}=3$.
					\item Lastly suppose that $x=(\lambda, \pm)$ with $\lambda_1=\lambda_2=m$.
						The simple roots are then given by $\eps_{i}-\eps_{m-1}$, $\eps_{m-1}-\eps_{m}$, $\eps_{m-1}+\eps_m$ where $i$ is either $m+n$ or $m-2$.
						Now $2\eps_{m-1}$ is not a root of $\lie{osp}(2m|2n)$, therefore the linear span of $\eps_{m-1}-\eps_{m}$ and $\eps_{m-1}+\eps_m$ consists only of $2$ roots.
						Additionally $\eps_{i}\pm\eps_{m}$ are indeed roots, so we see the claimed type $D$ phenomenon.\qedhere
				\end{itemize}				
			\end{proof}
		\subsection{Automorphisms}
			For $\lie{sl}(m|n)$ (resp.~$\lie{gl}(n|n)$), $\lie{osp}(2m+1|2n)$ and $\lie{osp}(2m|2n)$ \cref{weylgroupoidautos} yields the following explicit description of the automorphism group of an object of the Weyl groupoid.
			\begin{cor}
				If $\lie{g}$ is $\lie{sl}(m|n)$(resp.~$\lie{gl}(n|n)$), $\lie{osp}(2m+1|2n)$ or $\lie{osp}(2m|2n)$ and $x$ any vertex of the Cartan graph of $\lie{g}$, then $\Aut_\weylgrpd(x)$ is isomorphic to the Weyl group of $\lie{g}_\even$.
			\end{cor}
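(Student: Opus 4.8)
The plan is to read this off directly from \cref{weylgroupoidautos}, which already identifies $\Aut_\weylgrpd(x)$ with the Weyl group $W_x$ of the connected component of $\cartangr_\lie{g}$ containing $x$. Thus the entire content of the corollary is the identification $W_x\cong W(\lie{g}_\even)$, and since all three families split into identical components (\cref{borelevenpart,rootsordering}) it suffices to treat the single component containing the standard Borel subalgebra $\lie{b}(\emptyset)$ (resp.\ $\lie{b}(\emptyset,+)$). Recall that $W_x$ is generated by the reflections $s_\alpha$ at the \emph{principal} even roots, i.e.\ those even roots $\alpha$ for which $\alpha$ or $\frac{1}{2}\alpha$ occurs as a simple root of some ordered root base in the component.

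One inclusion is immediate: every principal even root is in particular a root of $\lie{g}_\even$, and the reflection $s_\alpha\in\GL(\lie{h}^*)$ depends only on the line through $\alpha$, so $s_\alpha=s_{\frac{1}{2}\alpha}$ and each generator of $W_x$ already lies in $W(\lie{g}_\even)$; hence $W_x\subseteq W(\lie{g}_\even)$. For the reverse inclusion I would exhibit, among the principal even roots, a full simple system for $\lie{g}_\even$ with respect to the fixed even Borel $\lie{b}_\even$; as the reflections at a simple system generate the entire Weyl group, this yields $W(\lie{g}_\even)\subseteq W_x$ and hence equality. Note that one cannot simply invoke the ``in particular'' clause of \cref{weylgroupoidautos}: already for $\lie{sl}(m|n)$ a non-adjacent even root such as $\eps_1-\eps_3$ never appears as a simple root in any shuffle base, so not every even root is principal. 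The point is rather that the principal ones suffice to generate.

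Concretely I would extract the required simple roots from the explicit ordered root bases of \cref{slmnchevalley,oddospchevalley,evenospchevalley}. The type-$A$ simple roots $\eps_i-\eps_{i+1}$ internal to the orthogonal and to the symplectic block are consecutive differences and already occur in the standard base; their reflections are the adjacent transpositions generating the $S_m$- and $S_n$-factors. It then remains to locate the remaining simple roots of $\lie{g}_\even$ as principal even roots: the long symplectic root $2\eps_{m+n}$ (for the $C_n$-factor in both orthosymplectic cases), the short orthogonal root $\eps_m$ (for the $B_m$-factor of $\lie{osp}(2m+1|2n)$), and the ``fork'' root $\eps_{m-1}+\eps_m$ (for the $D_m$-factor of $\lie{osp}(2m|2n)$). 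These occur in suitable bases of the component, as is already visible in the computation of \cref{coxeterrelations}, where precisely the triples of simple roots involving $2\eps_{m+n}$, $\eps_m$ and $\eps_{m-1}\pm\eps_m$ are exhibited.

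The main obstacle is this orthosymplectic bookkeeping: one must confirm that the short root $\eps_m$ (for $B_m$) and the fork root $\eps_{m-1}+\eps_m$ (for $D_m$) genuinely arise as simple roots somewhere in the component, rather than being even roots that fail to be principal, since missing either would shrink $W_x$ to a proper reflection subgroup (a copy of $W(D_m)$ inside $W(B_m)$, or an $S_m$ inside $W(D_m)$). Once all simple roots of $\lie{g}_\even$ are accounted for as principal even roots, the two inclusions combine to give $W_x=W(\lie{g}_\even)$, and the orders $m!\,n!$, $2^{m+n}m!\,n!$ and $2^{m+n-1}m!\,n!$ recorded for the number of components provide a consistency check.
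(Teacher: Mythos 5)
Your proof is correct, and it follows the same basic route as the paper: reduce via \cref{weylgroupoidautos} to identifying $W_x$ with $W(\lie{g}_\even)$, and then read off principality of the relevant even roots from \cref{slmnchevalley,oddospchevalley,evenospchevalley}. The difference is in the middle step. The paper's proof is a one-liner asserting that \emph{every} even root is principal and invoking the ``in particular'' clause of \cref{weylgroupoidautos}; your objection to this is well taken, since for the component-restricted notion that clause actually requires, a root such as $\eps_1-\eps_3$ in $\lie{sl}(m|n)$ with $m\geq 3$ is never simple in any shuffle base (the shuffle condition forces $2$ to sit between $1$ and $3$), and $\tfrac{1}{2}(\eps_1-\eps_3)$ is not a root. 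Your substitute -- prove the inclusion $W_x\subseteq W(\lie{g}_\even)$ directly from the definition of principal roots, and for the reverse inclusion exhibit the full simple system of $\lie{g}_\even$ relative to $\lie{b}_\even$ among the principal-in-component even roots -- is exactly the right repair, and the three non--type-$A$ roots you single out ($2\eps_{m+n}$ as $2\alpha_{m+n}$ or $\alpha_{m+n}$ in the standard base, $\eps_m$ as $\alpha_{m+n}$ when $\sigma(m+n)=m$, and $\eps_{m-1}+\eps_m$ as $\alpha_{m+n}$ when $\sigma$ ends in $m-1,m$, i.e.\ at the full rectangle) do all occur, so the remaining verification you flag is routine. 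In short: same strategy, but your version closes a genuine gap in the paper's stated argument rather than merely reproducing it.
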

			\begin{proof}
				We only have to show that any even root is principal, which follows easily from the explicit description in \cref{slmnchevalley,oddospchevalley,evenospchevalley}.
			\end{proof}
		\subsection{Some small examples}\label{examplesection}
			We explicitly describe the Weyl groupoids of $\lie{sl}(m|n)$, $\lie{osp}(2m+1|2n)$ and $\lie{osp}(2m|2n)$ in a few examples.
			\begin{ex}\label{sl22example}
					The contragredient Lie superalgebra $\lie{gl}(2|2)$ has three pairs of Chevalley generators, so the associated Weyl groupoid has three generators $s_1$, $s_2$ and $s_3$.
					By \cref{constructionBorelsl} we can index the Borel subalgebras (with a fixed even part) by partitions fitting into an $2\times 2$-rectangle, and by \cref{cartanmatrices} the Serre matrix is $\begin{smallpmatrix}2&-1&0\\-1&2&-1\\0&-1&2\end{smallpmatrix}$ everywhere.
					From the description of odd reflections in \cref{oddreflectionpartition} it follows that the generators of $\weylgrpd$ can be drawn in a diagram
					\begin{equation*}
						\begin{tikzcd}
							&&\ydiagram{2}\arrow[dr,"1", <->]\arrow[loop,in=125,out=55,looseness=4, <->, "2"']\\
							\emptyset\arrow[r, "2",<->]\arrow[loop,in=165,out=95,looseness=4, <->, "1"']\arrow[loop,in=195,out=265,looseness=4, <->, "3"]&\ydiagram{1}\arrow[ur, "3", <->]\arrow[dr, "1", <->]&&\ydiagram{1,2}\arrow[r, "2", <->]&\ydiagram{2,2}\arrow[loop,in=345,out=275,looseness=4, <->, "3"']\arrow[loop,in=15,out=75,looseness=4, <->, "1"]\\
							&&\ydiagram{1,1}\arrow[ur,"3", <->]\arrow[loop,in=235,out=305,looseness=4, <->, "2"]
						\end{tikzcd}
					\end{equation*}
					By \cref{coxeterrelations} the only relations are the familiar braid relations $s_1s_2s_1=s_2s_1s_2$, $s_2s_3s_2=s_3s_2s_3$ and $s_1s_3=s_3s_1$ (for all vertices of the Cartan graph).
			\end{ex}
			\begin{ex}\label{osp54example}
				For the Lie superalgebra $\lie{osp}(5|4)$, the underlying graph looks as in \cref{sl22example} but its Cartan subalgebra is $4$-dimensional instead of $3$-dimensional.
				So every vertex gets an additional loop with index $4$.
				\begin{equation*}
					\begin{tikzcd}
						&&\ydiagram{0,2}\arrow[dr,"1", <->]\arrow[loop,in=175,out=105,looseness=4, <->, "2"']\arrow[loop,in=75,out=5,looseness=4, <->, "4"']\\
						\emptyset\arrow[r, "2",<->]\arrow[loop,in=125,out=55,looseness=4, <->, "1"']\arrow[loop,in=305,out=235,looseness=4, <->, "3"']\arrow[loop,in=215,out=145,looseness=4, <->, "4"']&\ydiagram{1}\arrow[loop, in=135,out=65,looseness=4, <->, "4"']\arrow[ur, "3", <->]\arrow[dr, "1", <->]&&\ydiagram{1,2}\arrow[loop, in=115,out=45,looseness=4, <->, "4"']\arrow[r, <->, "2"]&\ydiagram{2,2}\arrow[loop,in=125,out=55,looseness=4, <->, "1"']\arrow[loop,in=305,out=235,looseness=4, <->, "3"']\arrow[loop,in=35,out=325,looseness=4, <->, "4"']\\
						&&\ydiagram{1,1}\arrow[ur,"3", <->]\arrow[loop,in=355,out=285,looseness=4, <->, "2"']\arrow[loop,in=255,out=185,looseness=4, <->, "4"']
					\end{tikzcd}
				\end{equation*}
				In this case the Serre matrix is $B_4=\begin{smallpmatrix}2&-1&0&0\\-1&2&-1&0\\0&-1&2&-2\\0&0&-1&2\end{smallpmatrix}$ everywhere.
				The generators are subject to the usual ``type $B$ braid relations'', which are the relations from \cref{sl22example} as well as $s_3s_4s_3s_4=s_4s_3s_4s_3$ and $s_is_4=s_4s_i$ for $i\in\{1,2\}$.
			\end{ex}
			\Cref{sl22example} and \cref{osp54example} had in common that the Serre matrices were all the same at every vertex.
			This is however not true for $\lie{osp}(2m|2n)$:
			\begin{ex}\label{osp44example}
				The Cartan graph of $\lie{osp}(4|4)$ is
				\begin{equation*}
					\begin{tikzcd}[row sep = large, column sep = large]
						\left(\emptyset,+\right)\arrow[r,<->,"2"]\arrow[loop,in=125,out=55,looseness=4, <->, "4"']\arrow[loop,in=305,out=235,looseness=4, <->, "3"']\arrow[loop,in=215,out=145,looseness=4, <->, "1"']&\left(\ydiagram{1},+\right)\arrow[loop,in=125,out=55,looseness=4, <->, "4"']\arrow[r, <->, "1"]\arrow[d,<->,"3"]&\left(\ydiagram{1,1},+\right)\arrow[loop,in=125,out=55,looseness=4, <->, "4"']\arrow[d,<->,"3"]\arrow[loop,in=35,out=325,looseness=4, <->, "2"']\\
						&\left(\ydiagram{0,2},\pm\right)\arrow[loop,in=215,out=145,looseness=4, <->, "2"']\arrow[r, <->, "1"]&\left(\ydiagram{1,2},\pm\right)\arrow[r, <->, "2"]&\left(\ydiagram{2,2},\pm,\right)\arrow[loop,in=125,out=55,looseness=4, <->, "4"']\arrow[loop,in=305,out=235,looseness=4, <->, "3"']\arrow[loop,in=35,out=325,looseness=4, <->, "1"']\\
						\left(\emptyset,-\right)\arrow[r,<->,"2"]\arrow[loop,in=125,out=55,looseness=4, <->, "4"']\arrow[loop,in=305,out=235,looseness=4, <->, "3"']\arrow[loop,in=215,out=145,looseness=4, <->, "1"']&\left(\ydiagram{1},-\right)\arrow[loop,in=305,out=235,looseness=4, <->, "3"']\arrow[r, <->, "1"]\arrow[u,<->,"4"]&\left(\ydiagram{1,1},-\right)\arrow[loop,in=305,out=235,looseness=4, <->, "3"']\arrow[u,<->,"4"]\arrow[loop,in=35,out=325,looseness=4, <->, "2"']
					\end{tikzcd}
				\end{equation*}
				The Serre matrix in the top left corner is of type $C_4=\begin{smallpmatrix}2&-1&0&0\\-1&2&-1&0\\0&-1&2&-1\\0&0&-2&2\end{smallpmatrix}$.
				The other two Serre matrices in the first row are of type $A_4=\begin{smallpmatrix}2&-1&0&0\\-1&2&-1&0\\0&-1&2&-1\\0&0&-1&2\end{smallpmatrix}$.
				The bottom row has the same Serre matrices as the first row except we swap the third and fourth row and column, i.e.~we have $\begin{smallpmatrix}2&-1&0&0\\-1&2&0&-1\\0&0&2&-2\\0&-1&-1&2\end{smallpmatrix}$ in the bottom left corner and $\begin{smallpmatrix}2&-1&0&0\\-1&2&0&-1\\0&0&2&-1\\0&-1&-1&2\end{smallpmatrix}$ for the other two.
				At $\left(\ydiagram{2,2}, \pm\right)$ we have $D_4=\begin{smallpmatrix}2&-1&0&0\\-1&2&-1&-1\\0&-1&2&0\\0&-1&0&2\end{smallpmatrix}$.
				The remaining two Serre matrices are given by $\begin{smallpmatrix}2&-1&0&0\\-1&2&-1&-1\\0&-1&2&-1\\0&-1&-1&2\end{smallpmatrix}$, in particular these are not of Dynkin type. 
				
				The generators of $\weylgrpd$ are subject to the braid relations (including relations of type $C=B$) specified by the Serre matrices.
			\end{ex}
	\appendix
	\section{Computation of Cartan data}\label{cartancomputation}
		In this appendix we explicitly describe the simple roots and Cartan data for the Borel subalgebras of $\lie{sl}(m|n)$, $\lie{osp}(2m+1|2n)$ and $\lie{osp}(2m|2n)$.
		For this it is more convenient to work with permutations instead of partitions, so we first need to set up a bit of combinatorics to pass between the two notions.
		\subsection{Combinatorics: Shuffles and partitions}
			Let $\partitions_{m\times n}$ be the set of partitions whose Young diagram fits into an $m\times n$-rectangle.
			We use the slightly unusual convention that the longest row is at the bottom, so for instance the diagram \ydiagram{1,2,4} represents the partition $\lambda=(4,2,1)$.
			
			Recall that a permutation $\sigma\in S_{m+n}$ is an \emph{$(m,n)$-shuffle} if $\sigma^{-1}(i)<\sigma^{-1}(j)$ for all pairs $i<j$ with either $i,j\leq m$ or $i,j>m$.
			We write $\shuffles(m,n)$ for the set of $(m,n)$-shuffles.
			Equivalently, $\shuffles(m,n)$ can be defined as a set of shortest coset representatives for the parabolic quotient $(S_m\times S_n)\backslash S_{m+n}$.
			Note that if $\sigma$ is an $(m,n)$-shuffle, then either $\sigma(m+n)=m$ or $\sigma(m+n)=m+n$.
			
			We will identify shuffles with partitions as follows:
			\begin{lemma}
				\label{shufflespartitions}
				There is a bijection between $\shuffles(m,n)$ and the set of Young diagrams (partitions) fitting into an $m\times n$-rectangle, as follows:
				for an $(m,n)$-shuffle $\sigma$ we draw a path in the $m\times n$-rectangle, where in the $i$-th step we go down if $\sigma(i)\leq m$ and right if $\sigma(i)>m$.
				Then the partition $\lambda$ consists of the boxes below the path.
			\end{lemma}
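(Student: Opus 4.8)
The plan is to factor the claimed correspondence through the set of \emph{lattice paths} in the $m\times n$-rectangle, by which I mean monotone staircase paths from the top-left to the bottom-right corner consisting of $m$ down-steps and $n$ right-steps. First I would record the elementary fact that such paths are in bijection with Young diagrams fitting into the rectangle: a path cuts the rectangle into two regions, and the boxes lying below the path assemble into the diagram of a partition $\lambda\in\partitions_{m\times n}$, while conversely the boundary of any such $\lambda$ is a lattice path of the required type. With the convention that the longest row sits at the bottom, one checks directly that the rows of the region below the path weakly increase in length from top to bottom, so it is indeed a valid Young diagram, and that this assignment is inverse to reading off the boundary path of $\lambda$.

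It then remains to biject $\shuffles(m,n)$ with lattice paths. Given $\sigma\in\shuffles(m,n)$, the word $w(\sigma)=(w_1,\dots,w_{m+n})$ with $w_i=D$ if $\sigma(i)\leq m$ and $w_i=R$ if $\sigma(i)>m$ has exactly $m$ letters $D$ and $n$ letters $R$, hence describes a lattice path of the required type. I would show $\sigma\mapsto w(\sigma)$ is a bijection by exhibiting an explicit inverse. The defining shuffle condition says precisely that, reading $\sigma(1),\dots,\sigma(m+n)$ from left to right, the values in $\{1,\dots,m\}$ occur in increasing order, and likewise the values in $\{m+1,\dots,m+n\}$. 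Thus $\sigma$ is completely determined by the positions of its $D$-letters: one recovers $\sigma$ by filling the $D$-positions with $1,\dots,m$ in increasing order and the $R$-positions with $m+1,\dots,m+n$ in increasing order. Conversely, any $D/R$-word with $m$ copies of $D$ produces, under this filling rule, a permutation satisfying the shuffle condition, and the two constructions are mutually inverse.

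Composing the two bijections yields the statement, and unwinding the definitions shows that the path attached to $\sigma$ in this way is exactly the one described in the lemma, namely that step $i$ goes down if and only if $\sigma(i)\leq m$. The only genuinely delicate point is bookkeeping: one must match the orientation conventions---the unusual choice of placing the longest row at the bottom, together with the identification of ``down'' steps with small values---so that the boxes below the path really assemble into $\lambda$ itself rather than its transpose or complement. Once these conventions are pinned down the verification is immediate, so I expect no substantial obstacle beyond careful tracking of the geometry.
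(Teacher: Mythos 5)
Your proof is correct. The paper in fact gives no proof of this lemma at all---it is stated and then merely illustrated by an example---so there is nothing to compare against beyond the implicit standard argument, which is exactly what you spell out: an $(m,n)$-shuffle is determined by the set of positions carrying values $\leq m$, these sets correspond to monotone lattice paths with $m$ down-steps and $n$ right-steps, and such paths are the boundary paths of partitions in $\partitions_{m\times n}$. Your attention to the orientation conventions (longest row at the bottom, down-steps for small values) is the only point where one could slip, and you handle it correctly.
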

			This bijection is best explained in an example.
			\begin{ex}
				Let $m=3$, $n=4$ and $\sigma=\begin{smallpmatrix}1&2&3&4&5&6&7\\4&1&5&2&6&7&3\end{smallpmatrix}\in\shuffles(3,4)$.
				According to \cref{shufflespartitions} $\sigma$ encodes the boundary path $rdrdrrd$ (where $r$ means ``right'' and $d$ ``down''):
				\begin{equation}
					\label{partitioninrectangle}
					\begin{tikzpicture}
						\fill[lightgray] (0,0) -- (0.5,0) -- (0.5,-0.5) -- (1,-0.5) -- (1,-1) -- (2,-1) -- (2,-1.5) -- (0,-1.5) -- (0,0);
							
						\draw[step=0.5,dashed] (0,0) grid (2,-1.5);
						\draw (0,0) rectangle (2,-1.5);
						
						\draw[thick] (0,0) -- (0.5,0) -- (0.5,-0.5) -- (1,-0.5) -- (1,-1) -- (2,-1) -- (2,-1.5);
					\end{tikzpicture}
				\end{equation}
				The permutation corresponding to $\sigma$ is $\lambda=(4,2,1)$.
			\end{ex}
		\subsection{Cartan data for \texorpdfstring{$\lie{sl}(m|n)$}{sl(m|n)}}
			\begin{prop}
				\label{slmnchevalley}
				Let $\sigma\in\shuffles(m,n)$.
				The simple roots corresponding to the Borel subalgebra $\lie{b}(\sigma)$ of $\lie{sl}(m|n)$ are
				\begin{equation*}
					\Pi(\sigma)=\{\alpha_i=\eps_{\sigma(i)}-\eps_{\sigma(i+1)}\mid 1\leq i\leq m+n-1\}.
				\end{equation*}
				For the corresponding Cartan datum $(B,\tau)$ we have $\tau_i=\even$ if either $\sigma(i),\sigma(i+1)\leq m$ or $\sigma(i),\sigma(i+1)>m$, and $\tau_i=\odd$ otherwise.
				The $i$-th row of the matrix $B$ is given by
				\begin{equation*}
					(b_{i,1},\dots,b_{i,m+n-1})=\begin{cases}
						(0,\dots,0,-1,2,-1,0,\dots,0)&\text{if }|e_i|=\even,\\
						(0,\dots,0,-1,0,1,0,\dots,0)&\text{if }|e_i|=\odd,
					\end{cases}
				\end{equation*}
				where the entry $2$ (resp.~the ``middle'' $0$) is in the $i$-th spot.
			\end{prop}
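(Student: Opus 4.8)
The plan is to reduce the statement to an explicit computation with matrix units, after identifying $\lie{b}(\sigma)$ as the stabiliser of the flag associated to the reordered basis $(v_{\sigma(1)},\dots,v_{\sigma(m+n)})$ of the natural module $\compl^{m|n}$. First I would translate the partition description of $\lie{b}(\sigma)$ from \cref{constructionBorelsl} into the language of matrix units. Using the shuffle--partition bijection of \cref{shufflespartitions}, the boundary path of $\sigma$ records exactly in which order the indices $1,\dots,m+n$ are read off: one goes down at step $k$ when $\sigma(k)\le m$ and right when $\sigma(k)>m$. I would check that the prescribed zero pattern (zeros on the boxes of $\lambda$ in the $B$-block, and the transpose-of-the-complement pattern in the $C$-block) is equivalent to the triangularity condition $x_{\sigma(k),\sigma(l)}=0$ for $k>l$. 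In other words, $\lie{b}(\sigma)$ consists precisely of those $X=(x_{pq})$ that are upper triangular with respect to the reordered basis. This is the conceptual heart of the argument and the only place where the combinatorics of the construction really enters.

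Granting this, the positive root spaces of $\lie{b}(\sigma)$ are spanned by the $E_{\sigma(k),\sigma(l)}$ with $k<l$, so the positive roots are the $\eps_{\sigma(k)}-\eps_{\sigma(l)}$ for $k<l$. Since $\eps_{\sigma(k)}-\eps_{\sigma(l)}=\sum_{i=k}^{l-1}(\eps_{\sigma(i)}-\eps_{\sigma(i+1)})$, the indecomposable positive roots are exactly the consecutive differences $\alpha_i=\eps_{\sigma(i)}-\eps_{\sigma(i+1)}$, which gives $\Pi(\sigma)$. I would take the natural Chevalley generators $e_i=E_{\sigma(i),\sigma(i+1)}$ and $f_i=E_{\sigma(i+1),\sigma(i)}$ and verify the relations $[h,e_i]=\alpha_i(h)e_i$, $[h,f_i]=-\alpha_i(h)f_i$ and $[e_i,f_j]=0$ for $i\neq j$ directly from $E_{ab}E_{cd}=\delta_{bc}E_{ad}$; these generators together with $\lie{h}$ generate $\lie{sl}(m|n)$, so $\Pi(\sigma)$ is an ordered root base.

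The parity claim is then immediate: $\tau_i=|e_i|=|E_{\sigma(i),\sigma(i+1)}|$ is $\even$ precisely when $\sigma(i)$ and $\sigma(i+1)$ lie in the same block, i.e.\ when both are $\le m$ or both are $>m$, and $\odd$ otherwise. For the matrix $B$ I would compute $h_i=[e_i,f_i]$ using the super-bracket $[E_{ab},E_{cd}]=\delta_{bc}E_{ad}-(-1)^{|E_{ab}||E_{cd}|}\delta_{da}E_{cb}$. In the even case this gives $h_i=E_{\sigma(i),\sigma(i)}-E_{\sigma(i+1),\sigma(i+1)}$, while in the odd case the sign $(-1)^{|e_i||f_i|}=-1$ flips the second term to $h_i=E_{\sigma(i),\sigma(i)}+E_{\sigma(i+1),\sigma(i+1)}$. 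Evaluating $b_{ij}=\alpha_j(h_i)=(\eps_{\sigma(j)}-\eps_{\sigma(j+1)})(h_i)$ and using $\eps_{\sigma(j)}(E_{\sigma(i),\sigma(i)})=\delta_{ij}$ since $\sigma$ is a bijection, the even case yields $b_{ij}=2\delta_{ij}-\delta_{j,i-1}-\delta_{j,i+1}$ and the odd case yields $b_{ij}=\delta_{j,i+1}-\delta_{j,i-1}$; these are exactly the two rows claimed, with the $2$ (resp.\ the middle $0$) in position $i$.

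The main obstacle I anticipate is the first step: making the passage between the partition/zero-pattern description of $\lie{b}(\sigma)$ and the triangularity condition in the reordered basis fully rigorous, in particular keeping track of the transpose-of-the-complement rule for the $C$-block and of the trace condition $\tr(A)-\tr(D)=0$ that cuts $\lie{gl}(m|n)$ down to $\lie{sl}(m|n)$. Once the identification of $\lie{b}(\sigma)$ with a flag stabiliser is in hand, everything else is a short and routine computation with matrix units.
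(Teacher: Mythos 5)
Your proposal is correct and follows essentially the same route as the paper: the same Chevalley generators $e_i=E_{\sigma(i),\sigma(i+1)}$, $f_i=E_{\sigma(i+1),\sigma(i)}$, the same computation $h_i=[e_i,f_i]=E_{\sigma(i),\sigma(i)}-(-1)^{\tau_i}E_{\sigma(i+1),\sigma(i+1)}$, and the same evaluation of $b_{ij}=\alpha_j(h_i)$ yielding the two claimed row patterns. The only difference is that the paper simply cites \cite[Lem.~3.4.3]{Musson} for the list of simple roots, whereas you propose to rederive it by identifying $\lie{b}(\sigma)$ with the stabiliser of the reordered flag; that identification is correct but is precisely the content of the cited result.
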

			\begin{proof}
				The simple roots are listed in \cite[Lem.~3.4.3]{Musson}.
				Since the elementary matrix $E_{rs}$ is of weight $\eps_r-\eps_s$ we can take
				\begin{align*}
					e_i&=E_{\sigma(i),\sigma(i+1)},&
					f_i&=E_{\sigma(i+1),\sigma(i)}
				\end{align*}
				as Chevalley generators.
				Clearly $e_i$ (and $f_i$) is even if and only if $\sigma(i)$ and $\sigma(i+1)$ are either both $\leq m$ or both $\geq m+1$.
				Therefore $h_i=[e_i,f_i]=E_{\sigma(i),\sigma(i)}-(-1)^{|e_i|}E_{\sigma(i+1),\sigma(i+1)}$, and thus
				\begin{equation*}
					\alpha_j(h_i)=\begin{cases}
						0&\text{if }j\neq i, i\pm 1,\\
						2&\text{if }j=i, |e_i|=\even,\\
						0&\text{if }j=i, |e_i|=\odd,\\
						-1&\text{if }j=i-1,\\
						-(-1)^{\tau_i}&\text{if }j=i+1.
					\end{cases}
				\end{equation*}
				This shows that the matrix $B$ has the claimed form.
			\end{proof}
		\subsection{Cartan data for \texorpdfstring{$\lie{osp}(2m+1|2n)$}{osp(2m+1|2n)}}
			\begin{prop}
				\label{oddospchevalley}
				Let $\sigma\in\shuffles(m,n)$.
				The simple roots for the Borel subalgebra $\lie{b}(\sigma)$ of $\lie{osp}(2m+1|2n)$ are
				\begin{equation*}
					\Pi(\sigma)=\{\alpha_i=\eps_{\sigma(i)}-\eps_{\sigma(i+1)}\mid 1\leq i\leq m+n-1\}\cup\{\alpha_{m+n}=\eps_{\sigma(m+n)}\}.
				\end{equation*}
				The corresponding Cartan datum $(B,\tau)$ can be described as follows.
				For $1\leq i\leq m+n-1$ we have $\tau_i=\even$ if either $\sigma(i),\sigma(i+1)\leq m$ or $\sigma(i),\sigma(i+1)>m$, and $\tau_{m+n}=\even$ if $\sigma(m+n)\leq m$ and $\tau_{m+n}=\odd$ if $\sigma(m+n)>m$.
				The $i$-th row of the matrix $B$ is given by
				\begin{equation*}
					(b_{i,1},\dots,b_{i,m+n-1})=\begin{cases}
						(0,\dots,0,-1,2,-1,0,\dots,0)&\text{if }i<m+n, |e_i|=\even,\\
						(0,\dots,0,-1,0,1,0,\dots,0)&\text{if }i<m+n, |e_i|=\odd,\\
						(0,\dots,0,-1,1)&\text{if }i=m+n,
					\end{cases}
				\end{equation*}
				where the entry $2$ (resp.~the ``middle'' $0$) is in the $i$-th spot.
			\end{prop}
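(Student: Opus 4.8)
The plan is to mirror the proof of \cref{slmnchevalley}: after recording the simple roots, I would exhibit explicit Chevalley generators as (symmetrized) matrix units in the realization \eqref{dfnospexplicit}, compute the coroots $h_i=[e_i,f_i]$ as diagonal matrices, and read off $b_{ij}=\alpha_j(h_i)$ from the diagonal of $h_i$. The list of simple roots is standard (see \cite[\S 3.4]{Musson}) and also follows from the description of $\lie{b}(\sigma)$ in \cref{constructionBoreloddosp} together with the weights of the matrix entries. Writing $E_{rs}$ for the elementary matrix with respect to the index set $\{0,\pm1,\dots,\pm(m+n)\}$ of \eqref{dfnospexplicit}, the position $r$ carries weight $\eps_r$ (with $\eps_{-r}=-\eps_r$ and $\eps_0=0$), so $E_{rs}$ has weight $\eps_r-\eps_s$, and an element of $\lie{h}$ with diagonal entries $d_r$ pairs with $\alpha_j$ as $d_{\sigma(j)}-d_{\sigma(j+1)}$.

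For the roots $\alpha_i=\eps_{\sigma(i)}-\eps_{\sigma(i+1)}$ with $1\le i\le m+n-1$ I would take the orthosymplectic analogues of the $\lie{sl}$-generators, namely symmetrizations $e_i=E_{\sigma(i),\sigma(i+1)}\mp E_{-\sigma(i+1),-\sigma(i)}$ and correspondingly for $f_i$, with the signs dictated by membership in $\lie{osp}(2m+1|2n)$. As in \cref{slmnchevalley} the generator $e_i$ is even precisely when $\sigma(i),\sigma(i+1)$ lie in the same block (both $\le m$ or both $>m$) and odd otherwise. The crucial point is that in the even case $h_i$ arises from an honest commutator (yielding a diagonal $E_{\sigma(i),\sigma(i)}-E_{\sigma(i+1),\sigma(i+1)}+\dots$), while in the odd case it arises from an anticommutator (yielding $E_{\sigma(i),\sigma(i)}+E_{\sigma(i+1),\sigma(i+1)}+\dots$); the mirrored entries at $-\sigma(i),-\sigma(i+1)$ are fixed by $\eps_{-r}=-\eps_r$ and do not affect $\alpha_j(h_i)$ for $j\le m+n$. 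Thus the relevant diagonal entries at $\sigma(i),\sigma(i+1)$ are $(1,-1)$ in the even case and $(1,1)$ in the odd case, and evaluating $\alpha_j(h_i)$ reproduces exactly the rows $(\dots,-1,2,-1,\dots)$ and $(\dots,-1,0,1,\dots)$.

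The genuinely new case is the last simple root $\alpha_{m+n}=\eps_{\sigma(m+n)}$, whose root vectors must involve the ``middle'' position $0$. If $\sigma(m+n)=r\le m$, then $\eps_r$ is an even orthogonal root and I would take $e_{m+n}=E_{r,0}-E_{0,-r}$, $f_{m+n}=E_{0,r}-E_{-r,0}$; the bracket collapses to $h_{m+n}=E_{rr}-E_{-r,-r}$, and since only $\alpha_{m+n-1}$ and $\alpha_{m+n}$ see the position $r=\sigma(m+n)$, one reads off the last row $(0,\dots,0,-1,1)$ directly. If instead $\sigma(m+n)=s>m$, then $\eps_s$ is an odd (non-isotropic) root supported on the entries $x,x_1$ of \eqref{dfnospexplicit}; taking $e_{m+n}=E_{0,-s}-E_{s,0}$ and $f_{m+n}=E_{0,s}+E_{-s,0}$, the anticommutator gives $h_{m+n}=E_{-s,-s}-E_{ss}$, so that $\alpha_{m+n}(h_{m+n})=-1$ and $\alpha_{m+n-1}(h_{m+n})=1$. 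This is the claimed row up to an overall sign, which is harmonized by rescaling $e_{m+n},f_{m+n}$; recall that the Cartan datum is only well-defined up to rescaling of its rows, so we are free to normalize the last row to $(0,\dots,0,-1,1)$.

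I expect the main obstacle to be the bookkeeping of sign conventions forced by the orthosymplectic form: both in choosing the symmetrized generators $e_i,f_i$ so that they genuinely lie in $\lie{osp}(2m+1|2n)$, and in tracking the signs in the brackets at the distinguished position $0$, where the even (orthogonal) and odd (symplectic) cases behave quite differently. Once the generators are pinned down, each $h_i$ is an explicit diagonal matrix and the evaluation $b_{ij}=\alpha_j(h_i)$ is a routine verification entirely parallel to \cref{slmnchevalley}.
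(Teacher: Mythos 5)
Your proposal is correct and follows essentially the same route as the paper: exhibit the (anti)symmetrized matrix units as Chevalley generators, compute the diagonal coroots $h_i=[e_i,f_i]$, and read off $b_{ij}=\alpha_j(h_i)$. The only deviation is the sign of $e_{m+n}$ in the case $\sigma(m+n)=m+n$ (the paper takes $E_{\sigma(m+n),0}-E_{0,-\sigma(m+n)}$, the negative of your choice, so its $h_{m+n}=E_{ss}-E_{-s,-s}$ gives the row $(0,\dots,0,-1,1)$ directly), and your appeal to the row-rescaling freedom legitimately absorbs this.
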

			\begin{proof}
				The simple roots are listed in \cite[Lem.~3.4.3]{Musson}.
				From the explicit description of the root spaces (see e.g.~\cite[Exercise~2.7.4]{Musson}) it follows that one possible choice for the Chevalley generators (for $1\leq i\leq m+n-1$) is
				\begin{align*}
					e_i&=\begin{cases}
						E_{\sigma(i),\sigma(i+1)}+E_{-\sigma(i+1),-\sigma(i)}&\text{if }\sigma(i)\leq m,\sigma(i+1)>m,\\
						E_{\sigma(i),\sigma(i+1)}-E_{-\sigma(i+1),-\sigma(i)}&\text{otherwise},
					\end{cases}\\
					e_{m+n}&=E_{\sigma(m+n),0}-E_{0,-\sigma(m+n)}\\
					f_i&=\begin{cases}
						E_{\sigma(i+1),\sigma(i)}+E_{-\sigma(i),-\sigma(i+1)}&\text{if }\sigma(i)>m,\sigma(i+1)\leq m,\\
						E_{\sigma(i+1),\sigma(i)}-E_{-\sigma(i),-\sigma(i+1)}&\text{otherwise},
					\end{cases}\\
					f_{m+n}&=\begin{cases}
						E_{0,m}-E_{-m,0}&\text{if }\sigma(m+n)=m,\\
						E_{0,m+n}+E_{-(m+n),0}&\text{if }\sigma(m+n)=m+n.
					\end{cases}
				\end{align*}
				Hence $e_i$ (and thus $f_i$) is even if and only if $\sigma(i)$ and $\sigma(i+1)$ are either both $\leq m$ or both $\geq m+1$.
				From this it follows that
				\begin{equation*}
					h_i=[e_i,f_i]=E_{\sigma(i),\sigma(i)}-E_{-\sigma(i),-\sigma(i)}-(-1)^{|e_i|}(E_{\sigma(i+1),\sigma(i+1)}-E_{-\sigma(i+1),-\sigma(i+1)})
				\end{equation*}
				for $1\leq i\leq m+n-1$, and (independent of the value of $\sigma(m+n)$)
				\begin{equation*}
					h_{m+n}=E_{\sigma(m+n),\sigma(m+n)}-E_{-\sigma(m+n),-\sigma(m+n)}.
				\end{equation*}
				Therefore the entries of the matrix $B$ are
				\begin{equation*}
					b_{ij}=\alpha_j(h_i)=\begin{cases}
						0&\text{if }j\neq i,i\pm 1,\\
						2&\text{if }j=i<m+n, \tau_i=\even,\\
						0&\text{if }j=i<m+n, \tau_i=\odd,\\
						1&\text{if }j=i=m+n,\\
						-1&\text{if }j=i-1,\\
						-(-1)^{\tau_i}&\text{if }j=i+1,
					\end{cases}
				\end{equation*}
				and thus $B$ has the claimed form.
			\end{proof}
		\subsection{Cartan data for \texorpdfstring{$\lie{osp}(2m|2n)$}{osp(2m|2n)}}
			\begin{prop}
				\label{evenospchevalley}
				Let $\sigma\in\shuffles(m,n)$ and $\eps\in\{+,-\}$.
				The simple roots and the Cartan data $(B,\tau)$ for the Borel subalgebras $\lie{b}(\sigma,\eps)$ of $\lie{osp}(2m|2n)$ can be described as follows:
				\begin{enumerate}
					\item For $\sigma(m+n)=m+n$ and $\lie{b}(\sigma,+)$ the simple roots are
						\begin{equation*}
							\Pi(\sigma,+)=\{\alpha_i=\eps_{\sigma(i)-\sigma(i+1)}\mid 1\leq i\leq m+n-1\}\cup\{\alpha_{m+n}=2\eps_{m+n}\}.
						\end{equation*}
						The Chevalley generators $e_i$ and $f_i$ for $1\leq i\leq m+n-1$ are even if and only if either $\sigma(i),\sigma(i+1)\leq m$ or $\sigma(i),\sigma(i+1)>m$, while $e_{m+n}$ and $f_{m+n}$ are even.
						The $i$-th row of $B$ is
						\begin{equation*}
							(b_{i,1},\dots,b_{i,m+n})=
							\begin{cases}
								(0,\dots,0,-1,2,-1,0,\dots,0)&\text{if }i\neq m+n-1, |e_i|=\even,\\
								(0,\dots,0,-1,0,1,0,\dots,0)&\text{if }i\neq m+n-1, |e_i|=\odd,\\
								(0,\dots,0,-1,2,-2)&\text{if }i=m+n-1, |e_i|=\even,\\
								(0,\dots,0,-1,0,2)&\text{if }i=m+n-1, |e_i|=\odd.
							\end{cases}
						\end{equation*}
						again with the entry $2$ (resp.~the ``middle'' $0$) in the $i$-th spot.
					\item For $\sigma(m+n)=m+n$ and $\lie{b}(\sigma,-)$ the simple roots $\Pi(\sigma,-)$ are obtained from  $\Pi(\sigma,+)$ by replacing every $\eps_m$ by $\eps_{-m}=-\eps_m$, and swapping the $m+n-1$-th and the $m+n$-th simple root.
						The Cartan datum is as above except that the last two rows and columns of the matrix $B$ are swapped.
					\item For $\sigma(m+n)=m$ and $\lie{b}(\sigma,\pm)$ the simple roots are
						\begin{multline*}
							\Pi(\sigma,\pm)=\{\alpha_i=\eps_{\sigma(i)-\sigma(i+1)}\mid 1\leq i\leq m+n-1\}\\
							\cup\{\alpha_{m+n}=\eps_{\sigma(m+n-1)}+\eps_{\sigma(m+n)}\}.
						\end{multline*}
						The Chevalley generators $e_i$ and $f_i$ for $1\leq i\leq m+n-1$ are even if and only if either $\sigma(i),\sigma(i+1)\leq m$ or $\sigma(i),\sigma(i+1)>m$, while $e_{m+n}$ and $f_{m+n}$ have the same parity as $e_{m+n-1}$.
						The $i$-th row $(b_{i,1},\dots,b_{i,m+n})$ of the matrix $B$ is given by the following table:
						\begin{equation*}
							\begin{array}{c|c|c}
								&\tau_i=\even&\tau_i=\odd\\
								\hline
								i<m+n-2&(0,\dots,0,-1,2,-1,0,\dots,0)&(0,\dots, 0,-1,0,1,0,\dots,0)\\
								i=m+n-2&(0,\dots,0,-1,2,-1,-1)&(0,\dots,0,-1,0,1,1)\\
								i=m+n-1&(0,\dots,0,-1,2,0)&(0,\dots,0,-1,0,2)\\
								i=m+n&(0,\dots,0,-1,0,2)&(0,\dots,0,-1,2,0)
							\end{array}
						\end{equation*}
				\end{enumerate}
			\end{prop}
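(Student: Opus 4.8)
The plan is to follow the template of the proofs of \cref{slmnchevalley,oddospchevalley}: the simple roots are recorded in \cite[Lem.~3.4.3]{Musson} and agree with the combinatorial description of $\lie{b}(\sigma,\eps)$ in \cref{constructionBorelevenosp}, so the real task is to exhibit explicit Chevalley generators $e_i,f_i$, compute the coroots $h_i=[e_i,f_i]\in\lie{h}$, and read off the Cartan datum from $b_{ij}=\alpha_j(h_i)$. I would first treat the case $\sigma(m+n)=m+n$ with $\eps=+$. For the interior roots $\alpha_i=\eps_{\sigma(i)}-\eps_{\sigma(i+1)}$ with $1\leq i\leq m+n-1$ one can take verbatim the orthosymplectic root vectors used for $\lie{osp}(2m+1|2n)$ in \cref{oddospchevalley}, i.e.\ the combinations $E_{\sigma(i),\sigma(i+1)}\pm E_{-\sigma(i+1),-\sigma(i)}$ with the sign forced by the form in \cref{dfnospexplicit}; the only difference is that the auxiliary index $0$ no longer occurs. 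The coroots then come out as $h_i=E_{\sigma(i),\sigma(i)}-E_{-\sigma(i),-\sigma(i)}-(-1)^{|e_i|}\bigl(E_{\sigma(i+1),\sigma(i+1)}-E_{-\sigma(i+1),-\sigma(i+1)}\bigr)$, exactly as in that case, and pairing against the $\alpha_j$ reproduces the interior rows of $B$ together with the parity rule that $\tau_i=\even$ precisely when $\sigma(i),\sigma(i+1)$ lie on the same side of $m$.

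The genuinely new ingredient is the final simple root, and here the two families of Borels diverge. For $\sigma(m+n)=m+n$ it is the symplectic long root $\alpha_{m+n}=2\eps_{m+n}$, for which I would take the even generators $e_{m+n}=E_{m+n,-(m+n)}$ and $f_{m+n}=E_{-(m+n),m+n}$ lying in the symmetric blocks ($e$ and $f$ in the notation of \cref{dfnospexplicit}); then $h_{m+n}=E_{m+n,m+n}-E_{-(m+n),-(m+n)}$, which gives $b_{m+n,m+n}=2$ and $b_{m+n,m+n-1}=-1$, while pairing $\alpha_{m+n}$ against $h_{m+n-1}$ produces the type-$C$ entry $b_{m+n-1,m+n}=-2(-1)^{\tau_{m+n-1}}$ of the first case. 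For $\sigma(m+n)=m$ the last root is instead the orthogonal fork root $\alpha_{m+n}=\eps_{\sigma(m+n-1)}+\eps_m$, whose root vector sits in the skew-symmetric block when $\sigma(m+n-1)\leq m$ and in a fermionic block when $\sigma(m+n-1)>m$; consequently $e_{m+n}$ inherits the parity of $e_{m+n-1}$, and the coroot computation attaches both $\alpha_{m+n-1}$ and $\alpha_{m+n}$ to $\alpha_{m+n-2}$ while leaving $b_{m+n-1,m+n}=0$ in the even case, which is exactly the type-$D$ fork of the third case.

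It remains to handle $\eps=-$ (with $\sigma(m+n)=m+n$), and I would reduce this to the case $\eps=+$ by the automorphism $\theta$ of $\lie{osp}(2m|2n)$ induced by interchanging the basis vectors indexed $m$ and $-m$, which restricts to the outer automorphism of $\lie{o}(2m)$ and acts on $\lie{h}^*$ by $\eps_m\mapsto-\eps_m$ (this is the row swap defining $\lie{b}(\sigma,-)$ from $\lie{b}(\sigma,+)$, cf.\ \cref{constructionBorelevenosp}). Since $\lie{b}(\sigma,-)=\theta(\lie{b}(\sigma,+))$ and $\theta$ fixes $\lie{h}$, applying $\theta$ to the generators above yields Chevalley generators for $\lie{b}(\sigma,-)$ with the same Cartan datum and with each $\eps_m$ replaced by $-\eps_m$ in the simple roots. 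The ordering convention of \cref{rootsordering}, however, always lists the even symplectic root $2\eps_{m+n}$ last for $\eps=+$ but in position $m+n-1$ for $\eps=-$ (cf.\ the reflection rules in \cref{oddreflectionpartition}); reindexing to match this convention swaps the last two simple roots and hence the last two rows and columns of $B$, which is exactly the content of the second case. I expect the only real obstacle to be bookkeeping: fixing the scalars in the mixed odd root vectors for the final root so that $[e_{m+n},f_{m+n}]$ is the correct coroot and the off-diagonal entries come out with the precise normalization ($0$, $\pm1$, $\pm2$) recorded in the tables, rather than any conceptual difficulty.
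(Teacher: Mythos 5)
Your proposal is correct and follows the paper's proof in all essentials: the simple roots are quoted from \cite[Lem.~3.4.3]{Musson}, explicit orthosymplectic root vectors are written down for each type of simple root, the coroots $h_i=[e_i,f_i]$ are computed, and $b_{ij}=\alpha_j(h_i)$ is evaluated case by case. Your generators for the interior roots $\eps_{\sigma(i)}-\eps_{\sigma(i+1)}$, for $2\eps_{m+n}$, and for the fork root $\eps_{\sigma(m+n-1)}+\eps_m$ coincide with the paper's choices, and the resulting entries of $B$ (including the type~$C$ entry $-2(-1)^{\tau_{m+n-1}}$ and the type~$D$ fork) come out the same. The one place you genuinely diverge is the case $\sigma(m+n)=m+n$, $\eps=-$: the paper treats it directly, listing explicit Chevalley generators for the simple roots $\eps_{\sigma(i_0-1)}+\eps_m$ and $-\eps_m-\eps_{\sigma(i_0+1)}$ (with $i_0=\sigma^{-1}(m)$) and rechecking the pairings $\alpha_j(h_i)$ in the rows and columns involving $i_0$, whereas you transport the $\eps=+$ data through the outer automorphism of $\lie{o}(2m)$ that interchanges the basis vectors indexed $m$ and $-m$. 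Your route is legitimate --- the closing remark of \cref{constructionBorelevenosp} already observes that this automorphism implements the row swap defining $\lie{b}(\lambda,-)$ --- and it spares the separate verification at $i_0$; the only points to phrase carefully are that the automorphism merely preserves $\lie{h}$ (acting on $\lie{h}^*$ by $\eps_m\mapsto-\eps_m$) rather than fixing it pointwise, and that, as you correctly say, the swap of the last two rows and columns of $B$ is pure reindexing forced by the ordering convention of \cref{rootsordering,oddreflectionpartition}, exactly as in the paper's Case~II.
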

			\begin{proof}
				The simple roots are listed in \cite[Lem.~3.4.3]{Musson}.
				A choice for the Chevalley generators $e_i$, $f_i$ and $h_i=[e_i,f_i]$ associated with the simple roots is:
				\begin{itemize}
					\item for $\alpha_i=\eps_{\sigma(i)}-\eps_{\sigma(i+1)}$,
						\begin{align*}
							e_i&=\begin{cases}
								E_{\sigma(i),\sigma(i+1)}+E_{-\sigma(i+1),-\sigma(i)}&\text{if }\sigma(i)\leq m,\sigma(i+1)>m,\\
								E_{\sigma(i),\sigma(i+1)}-E_{-\sigma(i+1),-\sigma(i)}&\text{otherwise},
							\end{cases}\\
							f_i&=\begin{cases}
								E_{\sigma(i+1),\sigma(i)}+E_{-\sigma(i),-\sigma(i+1)}&\text{if }\sigma(i)>m,\sigma(i+1)\leq m,\\
								E_{\sigma(i+1),\sigma(i)}-E_{-\sigma(i),-\sigma(i+1)}&\text{otherwise},
							\end{cases}\\
							h_i&=E_{\sigma(i),\sigma(i)}-E_{-\sigma(i),-\sigma(i)}-(-1)^{|e_i|}(E_{\sigma(i+1),\sigma(i+1)}-E_{-\sigma(i+1),-\sigma(i+1)}).
						\end{align*}
					\item for $\alpha_i=\eps_{\sigma(i)}+\eps_m$,
						\begin{align*}
							e_i&=E_{\sigma(i),-m}-E_{m,-\sigma(i)},\\
							f_i&=\begin{cases}
								E_{-m,\sigma(i)}-E_{-\sigma(i),m}&\text{if }\sigma(i)\leq m,\\
								E_{-m,\sigma(i)}+E_{-\sigma(i),m}&\text{if }\sigma(i)>m,\\
							\end{cases}\\
							h_i&=E_{\sigma(i),\sigma(i)}-E_{-\sigma(i),-\sigma(i)}+(-1)^{|e_i|}(E_{m,m}-E_{-m,-m}).
						\end{align*}
					\item for $\alpha_i=-\eps_m-\eps_{\sigma(i+1)}$,
						\begin{align*}
							e_i&=\begin{cases}
								E_{-m,\sigma(i+1)}-E_{-\sigma(i+1),m}&\text{if }\sigma(i+1)\leq m,\\
								E_{-m,\sigma(i+1)}+E_{-\sigma(i+1),m}&\text{if }\sigma(i+1)>m,\\
							\end{cases}\\
							f_i&=E_{\sigma(i+1),-m}-E_{m,-\sigma(i+1)},\\
							h_i&=E_{-m,-m}-E_{m,m}-(-1)^{|e_i|}(E_{\sigma(i+1),\sigma(i+1)}-E_{-\sigma(i+1),-\sigma(i+1)}).
						\end{align*}
					\item for $\alpha_i=2\eps_{m+n}$,
						\begin{align*}
							e_i&=E_{m+n,-(m+n)},&
							f_i&=E_{-(m+n),m+n},\\
							h_i=[e_i,f_i]&=E_{m+n,m+n}-E_{-(m+n),-(m+n)}.
						\end{align*}
				\end{itemize}
				Now we plug the $h_i$ into the $\alpha_j$.
				For this we have to consider each of the three classes of Borel subalgebras separately.
				\begin{caselist}
					\item $\sigma(m+n)=m+n$, $+$.
						Then
						\begin{equation*}
							\alpha_j(h_i)=\begin{cases}
								0&\text{if }j\neq i,i\pm 1,\\
								2&\text{if }j=i,\tau_i=\even,\\
								0&\text{if }j=i,\tau_i=\odd,\\
								-1&\text{if }j=i-1,\\
								-(-1)^{\tau_i}&\text{if }j=i+1<m+n,\\
								-2(-1)^{\tau_i}&\text{if }j=i+1=m+n,
							\end{cases}
						\end{equation*}
						and it follows that the matrix $B$ has the claimed form.
					\item $\sigma(m+n)=m+n$, $-$.
						In this case let $i_0=\sigma^{-1}(m)$.
						The simple roots are as in the previous case except that $\eps_{\sigma(i_0-1)}-\eps_m$ and $\eps_m-\eps_{\sigma(i_0+1)}$ are replaced by $\eps_{\sigma(i_0-1)}+\eps_m$ and $-\eps_m-\eps_{\sigma(i_0+1)}$, respectively, and the numbering is changed.
						The values $\alpha_j(h_i)$ are as in the previous case except we have to check the cases involving $i_0$ separately.
						Nevertheless it follows that the matrix $B$ is as in Case~I except we have to swap the last two rows and columns to account for the renumbering of the simple roots.
					\item $\sigma(m+n)=m$.
						Similarly to the previous cases we obtain
						\begin{equation*}
							\alpha_j(h_i)=\begin{cases}
								0&\text{if }j\neq i, i\pm 1, i\pm 2,\\
								2&\text{if }j=i, \tau_i=\even,\\
								0&\text{if }j=i, \tau_i=\odd,\\
								-1&\text{if }j=i-1, i<m+n,\\
								1-(-1)^{\tau_i}&\text{if }j=i-1, i=m+n,\\
								0&\text{if }j=i-2, i<m+n,\\
								-1&\text{if }j=i-2, i=m+n,\\
								-(-1)^{\tau_i}&\text{if }j=i+1<m+n,\\
								1+(-1)^{\tau_i}&\text{if }j=i+1=m+n,\\
								0&\text{if }j=i+2<m+n,\\
								-(-1)^{\tau_i}&\text{if }j=i+2=m+n,
							\end{cases}
						\end{equation*}
						as claimed.\qedhere
				\end{caselist}
			\end{proof}
	\clearpage
	\printbibliography
\end{document}